\documentclass[12pt,oneside,reqno]{amsart}
\usepackage[margin=1in, marginparwidth=0.8in, marginparsep=0.1in]{geometry}
\usepackage{enumitem, verbatim}
\usepackage{microtype}
\usepackage{physics}
\usepackage{xcolor}
\usepackage{bbold}

\allowdisplaybreaks

\usepackage{graphicx}
\usepackage{hyperref}
\usepackage{color}
\hypersetup{colorlinks, linkcolor=red, citecolor=cyan}
\usepackage{amsthm}
\usepackage[nameinlink]{cleveref}
\usepackage{amsmath,amssymb}
\usepackage{float}
\usepackage{bbm}
\usepackage{standalone}
\usepackage{relsize}
\usepackage{mathdots}
\usepackage{empheq}
\usepackage{stmaryrd}
\usepackage{fontawesome5}

\newcommand{\mapleleaf}{\faCanadianMapleLeaf}

\usepackage[most]{tcolorbox}
\usepackage{xcolor}

\newtcolorbox{colourful}{
  colback=red!5,
  colframe=red!45!black,
  boxrule=0.5pt,
  sharp corners,
  left=4pt,
  right=4pt,
  top=4pt,
  bottom=4pt,
  before skip=1em,
  after skip=1em
}

\usepackage{enumitem}

\definecolor{ProofRed}{RGB}{145,30,35}
\definecolor{ProofLightRed}{RGB}{255,244,244}

\newtcolorbox{claimbox}{
    enhanced,
    breakable,
    colback=ProofLightRed,
    colframe=ProofRed,
    boxrule=0.6pt,
    arc=0pt,
    left=8pt,
    right=8pt,
    top=6pt,
    bottom=6pt,
    before skip=8pt,
    after skip=8pt
}

\usepackage{tikz, tikz-cd, tikz-3dplot}
\usetikzlibrary{calc}

\makeatletter
\newcommand{\namedlabel}[2]{%
  \phantomsection
  \def\@currentlabel{#1}%
  \label{#2}%
}
\makeatother

\newtheorem{thm}{Theorem}
\newtheorem{lem}[thm]{Lemma}
\newtheorem{prop}[thm]{Proposition}
\newtheorem{claim}{Claim}
\newtheorem{cor}[thm]{Corollary}

\theoremstyle{definition}

\newtheorem{ntn}[thm]{Notation}
\newtheorem{obs}[thm]{Observation}

\newtheorem{rmk}[thm]{Remark}

\newcommand{\N}{\mathbb{N}}

\newcommand{\A}{\mathcal{A}}
\newcommand{\B}{\mathcal{B}}

\title{Uniform Factor-Balancedness in Arnoux--Rauzy Words}

\author{Halyna Bowley}

\date{September 2026}

\begin{document}

\begin{abstract}
We characterize uniform factor-balancedness in Arnoux--Rauzy words, generalizing the work of Espinoza, Popoli, and Stipulanti on ternary alphabets to finite alphabets of any size. In particular, we prove that an Arnoux--Rauzy word $\mathbf{x}$ is uniformly factor-balanced if and only if $\mathbf{x}$ has bounded weak partial quotients.
\end{abstract}

\maketitle

\section{Introduction}
\textit{Arnoux--Rauzy} words were defined in 1991 as a generalization of binary Sturmian words to larger alphabets \cite{Arnoux1991}. They can be described by considering the limits of sequences of the following substitutive maps. For a fixed letter $a$ in a finite alphabet $\A,$ we define the \textit{Arnoux--Rauzy substitution} $\sigma_a$ by 
$$\sigma_a(b) := \begin{cases} ba ~~&\text{if}~ b \in \A \setminus \{a\},\\
a~~~&\text{otherwise.}
\end{cases}$$We extend to a map on words by applying $\sigma_a$ letter by letter. 

Let $(a_n)_{n \geq 0}$ be a sequence of letters in $\A$ such that each letter in $\A$ occurs infinitely often in the sequence. Arnoux--Rauzy words are exactly those that have the same set of factors (finite consecutive subwords) as an infinite word $\mathbf{y}$ generated by the limit $$\mathbf{y}:= \lim_{n \to \infty} \sigma_{a_0} \circ \sigma_{a_1} \circ \cdots \circ \sigma_{a_{n}}(a).$$ This factor-equivalent characterization lends itself well to the study of various balancedness properties due to the invariance of balancedness among words with identical factor sets. In this paper, we are specifically interested in studying uniform factor-balancedness. We say that an infinite word $\mathbf{x}$ is \textit{uniformly factor-balanced} if there exists an integer $C > 0$ such that for all finite words $w,$ and all factors $u,v$ of $\mathbf{x}$ of the same length, the difference between the number of occurrences of $w$ in $u,v$ is bounded by $C.$

It turns out that an Arnoux--Rauzy word's generating sequence often controls the frequencies at which factors can occur. For a sequence $(a_n)_{n\geq 0},$ write $a_0a_1a_2 \dots = b_0^{k_0}b_1^{k_1}b_2^{k_2} \dots$ by grouping together maximal runs of equal consecutive letters. The derived sequence $(k_m)_{m \geq0}$ of positive integers is called the sequence of \textit{weak partial quotients}. Restricting to alphabets of size at most three, Espinoza, Stipulanti, and Popoli showed in 2026 that the properties of uniform factor-balancedness and having bounded weak partial quotients are equivalent \cite{espinoza2026factorbalancednesslinearrecurrencefactor}. It is natural to conjecture that this result extends to words over any finite alphabet. The main undertaking of this paper is showing that this conjecture is the truth.

\begin{thm}\label{thm:unif_fb}
    Let $\A$ be any finite alphabet. An Arnoux--Rauzy word over $\A$ is uniformly factor-balanced if and only if it has bounded weak partial quotients.
\end{thm}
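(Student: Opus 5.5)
We prove the two implications separately. The plan uses only the desubstitution structure of Arnoux--Rauzy words, following the ternary strategy of Espinoza, Popoli, and Stipulanti but keeping every estimate independent of the particular letters involved, so that the alphabet size $d=|\A|$ enters only through constants.

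\emph{Unbounded weak partial quotients imply failure of uniform factor-balancedness.} Suppose the weak partial quotients are unbounded, so for every $K$ some run $b^{k}$ with $k\ge K$ occurs in the directive sequence. Write $\mathbf{y}=\tau(\sigma_b^{k}(\mathbf{z}))$, where $\tau$ is the prefix of the composition before the run and $\mathbf{z}$ is again Arnoux--Rauzy. The image $\sigma_b^{k}(c)=cb^{k}$ for $c\ne b$, so $\sigma_b^k(\mathbf{z})$ contains both $b^{k}$ (or longer) and short blocks separated by single letters $c\neq b$. The key steps are as follows.
\begin{enumerate}[label=(\roman*)]
\item Choose the witness word. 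Take $w=\tau(bb)$, or more simply $w=\tau(b)$ together with a two-letter pattern, and choose factors $u=\tau(b^{k})$ and $v$ of the same length. The factor $v$ should be the image of a word in which $b$ occurs with bounded density. Such a $v$ exists because the letter following the run also occurs in $\mathbf{z}$, giving patterns like $c\,b^{k}\,c'\,b^{k}$ against longer stretches.
\item Lift the comparison through $\tau$. Occurrences of $w$ in $u$ grow like $k$ times a quantity depending on $\tau$, while occurrences in $v$ lag by a fixed fraction of that. To keep this uniform, I would avoid computing lengths and instead use recognizability of Arnoux--Rauzy substitutions. This step looks routine given Mossé-type recognizability for $\sigma_a$.
\end{enumerate}
A cleaner alternative for this direction is to note that the desubstituted word $\sigma_b^{k}(\mathbf{z})$ has $b$-imbalance about $k$ in the pattern $bb$ versus $cb$, and that counting occurrences of $\tau(bb)$ in $\tau(x)$ is, up to a bounded boundary error, counting $bb$ in $x$. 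This holds because the images of letters under Arnoux--Rauzy compositions are synchronizing. Taking $K\to\infty$ then gives unbounded differences.

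\emph{Bounded weak partial quotients imply uniform factor-balancedness.} Fix $w$ and $u,v$ of equal length. Take $n$ minimal so that $|w|$ is at most roughly the length of the shortest $\sigma_{a_0}\cdots\sigma_{a_n}$-image of a letter.
\begin{enumerate}[label=(\roman*)]
\item Reduce to two-letter factors. By recognizability and the bounded partial quotients, each occurrence of $w$ corresponds to an occurrence of one of finitely many ``ancestor'' words of bounded length $L$ in the desubstituted word $\mathbf{z}_n$. The bound $L$ depends only on the bound $K$ on the weak partial quotients and on $d$, because bounded weak partial quotients imply every letter appears in every window of $dK$ consecutive directive letters. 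That in turn gives a uniform ratio between the shortest and longest images of letters, which is linear recurrence with a constant depending on $K,d$.
\item Decompose $u$ and $v$. Write $u$ and $v$ as images under $\sigma_{a_0}\cdots\sigma_{a_n}$ of factors $u',v'$ of $\mathbf{z}_n$, up to bounded boundary pieces. Their lengths differ by a bounded amount in $|u'|$.
\item Transfer the balance. The count of $w$ in $u$ equals the count of ancestor words in $u'$, up to $O(1)$. So it suffices that $\mathbf{z}_n$ is uniformly balanced on factors of length at most $L$ with respect to abelian weights given by the image lengths. This is the main obstacle. Arnoux--Rauzy words can be letter-imbalanced in general (Cassaigne--Ferenczi--Zamboni), so I must use bounded partial quotients to get $C$-balancedness of letters and of bounded-length factors. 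The approach is to iterate the standard recursion $\Delta(\sigma_a(x))\le \Delta(x)+1$ for imbalances under a block of directive letters. Over one full cycle containing all letters, the imbalance contracts by a factor $<1$ depending on $K,d$, in the spirit of the Berthé--Cassaigne--Steiner balance bounds for bounded partial quotients. This yields a geometric series bound on the imbalance, uniform in $n$.
\end{enumerate}
Combining these steps gives a constant $C=C(K,d)$, proving uniform factor-balancedness.
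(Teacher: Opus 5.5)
\textbf{There is a genuine gap in the ``if'' direction, and it sits exactly where the paper's new work lies.}

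In step (i) you assert that bounded weak partial quotients force every letter to occur in every window of $dK$ consecutive directive letters, and hence give linear recurrence. This is false for $d\ge 3$. Over $\{a,b,c\}$, take the directive sequence $(ab)^{n_1}c\,(ab)^{n_2}c\,(ab)^{n_3}c\cdots$ with $n_j\to\infty$. All its weak partial quotients equal $1$, yet $c$ is absent from windows of length $2n_j$. Moreover, $(\sigma_a\sigma_b)^{n_j}$ preserves the number of $c$'s while stretching lengths exponentially, so $c$ has arbitrarily long gaps in the words $\mathbf{x}^{(m)}$. The window condition you are implicitly using is the Delecroix--Hejda--Steiner hypothesis, which the paper stresses is much stronger than bounded weak partial quotients.

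Consequently step (iii) has nothing to stand on. There is no uniformly bounded ``full cycle containing all letters'' over which an imbalance could contract. The recursion $\Delta(\sigma_a(x))\le\Delta(x)+1$ is not supplied by Lemma~\ref{lem:delta} either: desubstituting equal-length factors produces factors of different lengths (see (\ref{eq:L})). Controlling those length discrepancies through long stretches of the directive sequence in which some letters are absent is the whole difficulty. The Berth\'e--Cassaigne--Steiner bound you invoke is ternary-only and is proved by casework that does not scale.

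What is missing is Theorem~\ref{thm:lb}: a letter-balance constant depending only on $d$ and the run bound $h$, valid simultaneously for every $\mathbf{x}^{(n)}$. The paper proves it by contradiction:
\begin{itemize}
\item Lemma~\ref{lem:break} and Corollary~\ref{cor:break} turn one large discrepancy into a pair $z,b$ with large discrepancies and collective imbalance at least $C$, under Condition~\ref{cond:maple}.
\item The Control Lemma~\ref{lem:big} propagates this to a growing set $\B_i$ with $\rho^{(i)}(\B_i)\ge(|\B_i|-1)C-R_{|\B_i|}>1$ at every level. This is impossible once the desubstituted factors have length at most $1$.
\item Bounded runs are used only through (\ref{eq:rec_d}) with $m\le h$.
\end{itemize}
Given that theorem, your steps (i)--(ii) are replaced by Proposition~\ref{prop:implied}(2), imported from Espinoza--Popoli--Stipulanti. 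That result needs bounded weak partial quotients plus uniform letter-balance, not linear recurrence.

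\textbf{Your ``only if'' sketch also fails as written.} The paper simply cites Espinoza--Popoli--Stipulanti for this direction. In $\sigma_b^k(\mathbf{z})$, any two non-$b$ letters are separated by at least $k$ letters $b$, so every window of length $k$ contains at most one non-$b$ letter. Hence the equal-length comparison factor ``in which $b$ occurs with bounded density'' does not exist. Also, in a window of length $n$ containing $m$ non-$b$ letters, the number of occurrences of $bb$ is $n-2m+O(1)$. Its fluctuation is governed by the balance of $\mathbf{z}$, not by $k$, so the claimed imbalance ``about $k$'' for the pattern $bb$ is wrong.

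A witness that works must have length growing with $k$. For example, take $w=b^{\lceil k/2\rceil}$, transported through $\tau$ via recognizability. It occurs $\lfloor k/2\rfloor+1$ times in $b^k$, but at most once in a window of length $k$ centred on a non-$b$ letter.
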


\subsection{A few words on words.} An \textit{alphabet} is a finite set $\A$ whose elements we call \textit{letters}. A \textit{word} over $\A$ is a (potentially infinite) concatenation of letters $w = w_1w_2\cdots$, where each $w_i$ is in $\A.$ We say $w$ has length $|w| = n$ if it consists of $n$-many letters. A finite word $w = w_1 \cdots w_n$ is a \textit{factor} of another (potentially infinite) word $x = x_1x_2 \cdots$ if there exists $i \geq 1$ such that $w_1 \cdots w_n = x_i \cdots x_{i+n-1}.$ In this case, we say that the index $i$ is an \textit{occurrence} of $w$ in $x$, and denote by $|x|_w$ the number of occurrences of $w$ in $x.$ 

Let $\mathbf{x}$ be an infinite word and $w$ a finite word. Fix a constant $C >0.$ We say that $w$ is \textit{C-balanced} in $\mathbf{x}$ if, for all equal-length factors $u,v$ of $\mathbf{x},$ we have that $||u|_w - |v|_w| \leq C.$ We say that $\mathbf{x}$ is \textit{C-letter-balanced} if every letter $a$ is $C$-balanced in $\mathbf{x}.$ More broadly, we say $\mathbf{x}$ is \textit{factor-balanced} if, for any finite word $w,$ there exists a constant $C_w$ such that $w$ is $C_w$-balanced in $\mathbf{x}.$ If the constant does not depend on $w,$ then we get the definition of uniform factor-balancedness given on the previous page. In the case of Arnoux--Rauzy words, factor-balancedness is equivalent to letter-balancedness (but importantly, not to uniform factor-balancedness) \cite{BERTHE201993}. 

Finally, we note that the substitutive characterization of Arnoux--Rauzy words given in the introduction is not their original definition. An infinite word $\mathbf{x}$ over an alphabet of size $d$ is defined to be Arnoux--Rauzy if all of its factors occur infinitely often, it has exactly $(d-1)n + 1$ factors of length $n,$ and has exactly one left special and one right special factor of length $n$. This definition is equivalent to the previously stated substitutive characterization \cite{Arnoux1991}; in this paper we exclusively use the latter. 

\subsection{History and motivation}
The study of Arnoux--Rauzy words was motivated by that of Sturmian words, the class of binary aperiodic infinite words of lowest possible factor complexity (in particular, those words that have exactly $n+1$ factors of length $n$). The property of being Sturmian is closely related to balancedness: one can prove that a binary word is Sturmian if and only if it is aperiodic and $1$-balanced \cite{DBLP:journals/mst/CovenH73}. In 1940, Morse and Hedlund proved that Sturmian sequences are natural codings of irrational rotations of the circle \cite{Morse1940SymbolicDI}. Later, Arnoux and Rauzy proved that Morse and Hedlund's work connects to an interpretation of Sturmian sequences through continued fraction algorithms \cite{Arnoux1991}.

Arnoux--Rauzy words were defined in 1991 as larger-alphabet analogues of Sturmian words in order to extend these correspondences beyond the binary setting \cite{thuswaldner2020sadics}. For example, it was conjectured that every ternary Arnoux--Rauzy word was a natural coding of a translation of the two-dimensional torus \cite[Introduction]{Cassaigne2000}. However, these efforts were stymied by the fact that certain balancedness properties fail to hold over larger alphabets. In contrast to the strict $1$-balancedness of Sturmian words, Cassaigne, Ferenczi, and Zamboni proved the existence of ternary Arnoux--Rauzy words that are imbalanced for all finite $C$ \cite{Cassaigne2000}. Moreover, they used this imbalancedness result to disprove the conjectured correspondence with toral rotations. 

Subsequently, balancedness drew interest as a metric of how far a given Arnoux--Rauzy word strays from its nicely-behaved Sturmian archetype (see \cite{thuswaldner2020sadics} for a general overview and \cite{BERTHE201993} and \cite{berthe2021multidimensionalcontinuedfractionssymbolic} for more specific results regarding the relation between balancedness and dynamics). In the aforementioned paper demonstrating imbalancedness, Cassaigne, Ferenczi, and Zamboni note that every linearly recurrent Arnoux--Rauzy word is letter-balanced (and therefore, as mentioned earlier, factor-balanced). 

Obtaining a more precise characterization of balancedness, however, has proven somewhat tricky. In 2013, Berth\`e, Cassaigne, and Steiner were able to show that, over a ternary alphabet, an Arnoux--Rauzy word is letter-balanced if its weak partial quotients are bounded \cite{BCS}. Shortly after, Delecroix, Hejda, and Steiner gave a sufficient criterion for letter-balancedness over arbitrary alphabets: namely, they proved that if there exists a constant $h$ such that $\{a_{n}, \dots, a_{n+h}\} = \A$ for all $n$, where $(a_n)_{n \geq0}$ generates the Arnoux--Rauzy word $\mathbf{x}$, then $\mathbf{x}$ is letter-balanced \cite{Delecroix_2013}. However, requiring the existence of such an $h$ is much stricter than bounding weak partial quotients. They show in the same paper that almost every Arnoux--Rauzy word is letter-balanced. In a 2026 preprint, Espinoza, Popoli, and Stipulanti were able to employ the aforementioned result from \cite{BCS} to show that an Arnoux--Rauzy word over a ternary alphabet has bounded weak partial quotients if and only if it is uniformly factor-balanced \cite{espinoza2026factorbalancednesslinearrecurrencefactor}. This result in \cite{espinoza2026factorbalancednesslinearrecurrencefactor} is a special case of Theorem~\ref{thm:unif_fb}.

\subsection{Our methods.}\label{ss:our_method} A central ingredient in the proof of Espinoza, Popoli, and Stipulanti's result in \cite{espinoza2026factorbalancednesslinearrecurrencefactor} is the fact that, over ternary alphabets, an Arnoux--Rauzy word having bounded weak partial quotients implies that it is letter-balanced \cite{BCS}. In particular, as we explain in \S\ref{sec:main_thm}, this is the only step in their proof that requires a restriction on the alphabet's size. The key to proving Theorem~\ref{thm:unif_fb} is, consequently, a generalization of this letter-balance statement to alphabets of any size.

\begin{thm}\label{thm:lb}
     Let $\mathbf{x}$ be an Arnoux--Rauzy word over an alphabet of size $d$ whose weak partial quotients are bounded by $h.$ Recursively, define $$R_2:= 0, \quad R_{N+1}:= (h+3)R_N + 2h + 13.$$ Set
     
     $$C_{d,h} := \left\lceil\frac{R_d +2}{d-1}\right\rceil.$$ Then $\mathbf{x}$ is $(C_{d,h})$-letter-balanced.
\end{thm}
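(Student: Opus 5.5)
The plan is an induction on the alphabet size $d$, with $R_d$ serving as a bound on the imbalance measured in a single linear functional. The base case is $d=2$, where Arnoux--Rauzy words are Sturmian and hence $1$-balanced; this is consistent with $R_2=0$. Rather than compare letters pairwise, I would control the quantity
\[
\Delta_a(u) := (d-1)|u|_a - \sum_{b\neq a}|u|_b ,
\]
or, more precisely, the deviation of $|u|_a$ from $|u|\,\mu_a$, where $\mu$ is the frequency vector. The target statement is that for every letter $a$, the deviations $\bigl||u|_a-|u|\mu_a\bigr|$ over factors $u$ of $\mathbf{x}$ range in an interval of length at most $(R_d+2)/(d-1)$. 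Then $\bigl||u|_a-|v|_a\bigr|\le\lceil (R_d+2)/(d-1)\rceil$ for $|u|=|v|$. The division by $d-1$ suggests the induction really bounds a sum over $d-1$ letters by $R_d+2$, and averaging gives the single-letter bound.

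The main steps, in order, are as follows.

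\textbf{Step 1: desubstitution.} Write $\mathbf{x}$ (up to factor equivalence) as $\sigma_{b_0}^{k_0}(\mathbf{x}')$, where $\mathbf{x}'$ is Arnoux--Rauzy with directive sequence $b_1^{k_1}b_2^{k_2}\cdots$ and $k_0\le h$. Every factor $u$ of $\mathbf{x}$ can be written as $s\,\sigma_{b_0}^{k_0}(u')\,p$, where $u'$ is a factor of $\mathbf{x}'$ and $s,p$ are a suffix and prefix of images of letters. These images have the form $b_0^{k_0}c$ and have length at most $k_0+1\le h+1$, which is where the factor $h+3$ and the additive $2h$ should come from.

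\textbf{Step 2: reduction to fewer letters.} I would not iterate the desubstitution indefinitely, since that gives no uniform bound. Instead, I would use the structure of bounded weak partial quotients: between consecutive appearances of a fixed letter $a$ in the directive sequence, only the other $d-1$ letters are used. Erasing $a$ (that is, projecting onto $\A\setminus\{a\}$, equivalently returning to $a$) should yield a word that is, up to bounded perturbation, Arnoux--Rauzy over $d-1$ letters. Its weak partial quotients are still at most $h$, but possibly merged: blocks $c^{k}ac^{k'}$ merge when $a$ is removed. That is the delicate point; I would handle it by performing the erasure only over a window that contains no $a$.

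\textbf{Step 3: recursion on the constant.} Apply the induction hypothesis, giving imbalance $R_{d-1}$ on the sub-alphabet. Then lift back through one block $\sigma_a^{k}$ with $k\le h$. Each letter of the smaller word contributes at most one $a$ per power, which multiplies the imbalance by roughly $h+1$, and the boundary pieces $s,p$ add $O(h)$. Careful bookkeeping of the constants $+2$ and $+13$ should produce exactly
\[
R_d=(h+3)R_{d-1}+2h+13.
\]

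The hard part is Step 2. Arnoux--Rauzy words are not closed under erasing a letter, and the directive sequence of $\mathbf{x}$ never omits a letter forever. So the reduction must be local: for each pair of factors $u,v$, cut them into pieces that are images, under a composition of substitutions avoiding one letter, of factors of a $(d-1)$-letter Arnoux--Rauzy-like word. Boundedness of the weak partial quotients is used to guarantee that each cutting level costs only a bounded additive error and a multiplicative factor $h+3$, not compounding across infinitely many levels. I would first try to formalise this through return words to the letter $a$: the derived sequence with respect to $a$ should live essentially on $d-1$ letters between occurrences of $\sigma_a$ in the directive sequence.
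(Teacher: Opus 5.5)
Your Step~2 carries the whole proof, and the reduction it is meant to deliver does not exist in the form you describe. The recursion in Step~3 is read off from the statement rather than derived. Concretely:

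(i) Passing to return words never lowers the alphabet size. If $a_0=a$, then $\mathbf{x}$ is a concatenation of blocks $a$ and $ca$ ($c\neq a$). The return words to $a$ are $a$ and $ac$ for $c\neq a$, and the derived word is just the $d$-letter word $\mathbf{x}^{(1)}$. In general, every factor of a $d$-letter Arnoux--Rauzy word has exactly $d$ return words.

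(ii) Bounded weak partial quotients bound runs, not gaps. For $d=3$, the directive sequence $a(bc)^1a(bc)^2a(bc)^3\cdots$ has all weak partial quotients equal to $1$, yet its windows avoiding $a$ are arbitrarily long.

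(iii) Localising to such a window $a_0\cdots a_n$ (with $a_{n+1}=a$ and $\tau=\sigma_{a_0}\circ\cdots\circ\sigma_{a_n}$) does not produce a $(d-1)$-letter problem. The word $\mathbf{x}^{(n+1)}$ is, up to factors, $\sigma_a(\mathbf{x}^{(n+2)})$, so it has no $a$-free factor of length $2$. Cutting $u$ at images of $a$ therefore leaves only pieces $\tau(c)$ of single letters, arranged according to the $d$-letter word $\mathbf{x}^{(n+1)}$. The letter counts of $u$ are governed by those of $u^{(n+1)}$, i.e.\ by the original problem one level down, pushed through an incidence matrix with unbounded entries. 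There is nothing for a $(d-1)$-letter induction hypothesis to act on.

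Even granting some version of the lift, it would have to be repeated at every occurrence of $a$ in the directive sequence. A loss of a factor $h+3$ per lift then compounds over a number of levels that grows with $|u|$. You give no mechanism preventing this, and that mechanism is exactly the missing idea.

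The paper never changes the alphabet, and $R_N$ is indexed by the size of a set of letters, not by $d$. It argues by contradiction along a single pair of factors:
\begin{itemize}
\item Starting from equal-length $u,v$ with $|u|_z-|v|_z>C_{d,h}$, Lemma~\ref{lem:break} and Corollary~\ref{cor:break} produce, at some depth, factors satisfying the minimality Condition~\ref{cond:maple}. They also produce letters $z,b$ with $d_z\ge C+1$, $d_b\ge C-2$ and $\rho(\{z,b\})\ge C$.
\item The Control Lemma (Lemma~\ref{lem:big}) follows this pair through every desubstitution. It maintains a nested set $\mathcal{B}_i\supseteq\{z,b\}$ of letters whose discrepancies stay above a threshold, with $\rho^{(i)}(\mathcal{B}_i)\ge(|\mathcal{B}_i|-1)C-R_{|\mathcal{B}_i|}$.
\item A letter $a_i\notin\mathcal{B}$ joins once $d^{(i)}_{a_i}\ge S_N$. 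This is where $R_{N+1}=R_N+C-S_N=(h+3)R_N+2h+13$ comes from, and $R_2=0$ encodes the initial pair, not Sturmian balance.
\item The bound $h$ enters only through (\ref{eq:rec_d}), to bound the drop of a single ``chaotic'' letter over one run.
\item Since $|\mathcal{B}_i|\le d$ and $(N-1)C_{d,h}-R_N>1$ for $2\le N\le d$, one gets $\rho^{(i)}(\mathcal{B}_i)>1$ at every index. This is impossible once $|u^{(i)}|,|v^{(i)}|\le 1$.
\end{itemize}
So the non-compounding comes from level-independent invariants along one trajectory, not from an induction on alphabet size. The $d-1$ in $C_{d,h}$ is $|\mathcal{B}|-1$ at $\mathcal{B}=\mathcal{A}$, not an average over $d-1$ letters. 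Averaging a bound on a sum would not bound the individual terms anyway.
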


\begin{rmk}
    For intuition on the definition of the sequence $(R_N)_{N \geq 2},$ see Subsection \ref{sec:outline_big} (the outline of the proof of the main lemma). The recursive definition is most natural for the proof of Lemma~\ref{lem:big}, but we also have two closed-form expressions 
    $$R_N =\frac{2h+13}{h+2}((h+3)^{N-2} -1);$$$$C_{d,h} =\left\lceil \left(\frac{(2h +13)((h+3)^{d-2}-1)}{h+2} + 2\right) \frac{1}{d-1}\right\rceil = O\left(\frac{(h+3)^{d-2}}{d}\right).$$
\end{rmk}
\bigskip

The methods used in \cite{BCS} to show letter-balancedness in the ternary setting do not readily generalize: the proof is structured around casework that quickly becomes unmanageable over alphabets with as few as four or five letters. We therefore develop a more robust mechanism for controlling letter discrepanies that does not depend on the size of the alphabet. This is Lemma~\ref{lem:big}, which we state in \S\ref{sec:main_thm} and prove in \S\ref{sec:big}. The governing heuristic of this lemma is that an excessive letter discrepancy concentrated in one letter propagates to a collective discrepancy over a growing set of letters as we ``unpeel" successive layers of $\sigma_a$'s. This action of ``unpeeling" is the desubstitution described in the following section.

\section{Desubstitution and basic identities}\label{sec:identities}
We start by introducing the following simple desubstitution property of the $\sigma_a$'s, along with useful related notation.

\begin{ntn}\label{notation:1}
    As noted in \cite{BCS}, for any factor $u$ of $\sigma_a(\mathbf{x})$ for some word $\mathbf{x}$ (either finite or infinite), there exists a unique factor of $\mathbf{x}$, which we denote by $u^{(1)}$, that satisfies

\begin{equation*}
a^{\delta}\,\sigma_{a}(u^{(1)}) = u\,a^{\varepsilon},
\end{equation*} where

\[
\delta =
\begin{cases}
1 & \text{if } u \text{ starts with } a,\\
0 & \text{otherwise};
\end{cases}
\qquad
\varepsilon =
\begin{cases}
0 & \text{if } u \text{ ends with } a,\\
1 & \text{otherwise}.
\end{cases}
\]
    
    For a factor $u$ of a word $\sigma_{a_1} \circ \cdots \circ \sigma_{a_n}(\mathbf{x}),$ denote by $u^{(n)}$ the factor of $\mathbf{x}$ obtained after applying this desubstitution $n$ times. Analogously, given an Arnoux--Rauzy word $\mathbf{x}$ generated by the sequence $(a_n)_{n \geq 0},$ we denote by $\mathbf{x}^{(m)}$ the Arnoux--Rauzy word generated by the truncated sequence $(a_n)_{n \geq m}.$ We also introduce several abbreviations for convenience's sake. Let
    \begin{align*}
        L_{u,v}^{(n)} &:= |u^{(n)}| - |v^{(n)}|;\\
        d_{b,u ,v}^{(n)} &:= |u^{(n)}|_b - |v^{(n)}|_b;
    \end{align*} denote the length and letter discrepancy, respectively, of the $n$th preimages of a fixed pair of factors. In the same vein, for a fixed subset $\B \subset \A,$ we denote by

    \begin{align*}
        d_{\B,u,v}^{(n)} &:= \sum_{b \in \B} d_{b, u, v}^{(n)};\\
        \rho^{(n)}(\B)_{u,v} &:= d_{\B,u,v}^{(n)} - L_{u,v}^{(n)} = -d_{\A
        \setminus \B, u, v}^{(n)},
    \end{align*} the collective letter discrepancy of letters in $\B$ and the complement of $\B,$ respectively, of the $n$th preimages. We drop the subscript $u,v$ when it is clear from context.
\end{ntn}

The following lemma describes how desubstitution affects factor length and letter occurrence. 

\begin{lem}[Lem 17 in \cite{BCS}]\label{lem:delta} Let $u$ be a factor of $\sigma_{i}(\mathbf{x})$ for some word $\mathbf{x}.$ Then there exists $\delta_u \in \{-1, 0, 1\}$ such that 

\[
|u^{(1)}|
= |u|_{i} + \delta_u;
\]

\[
|u^{(1)}|_j =
\begin{cases}
|u|_j & \text{if } j \in \mathcal{A} \setminus \{i\}, \\[1ex]
2|u|_{i} - |u| + \delta_u & \text{if } j = i.
\end{cases}
\] Furthermore, $|u^{(1)}| < |u|$ if $|u| \ge 2$, and
$|u^{(1)}| \in \{0,1\}$ if $|u| \in \{0,1\}$.   
\end{lem}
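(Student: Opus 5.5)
Write $i$ for the letter $a$ of Notation~\ref{notation:1}. The plan is to compute $u^{(1)}$ directly from the defining identity $a^{\delta}\,\sigma_i(u^{(1)}) = u\,a^{\varepsilon}$ by comparing letter counts on both sides. Let $w = u^{(1)}$. Since $\sigma_i(b) = bi$ for $b \neq i$ and $\sigma_i(i) = i$, for $j \neq i$ we have $|\sigma_i(w)|_j = |w|_j$. For the letter $i$ we get one occurrence per letter of $w$, so $|\sigma_i(w)|_i = |w|$. The length is $|\sigma_i(w)| = |w| + (|w| - |w|_i) = 2|w| - |w|_i$.

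The first step is to read off the counts from the identity. For $j \neq i$, the powers of $i$ on either side contribute nothing, so $|w|_j = |u|_j$. For the letter $i$ we get $\delta + |w| = |u|_i + \varepsilon$. Hence $|w| = |u|_i + \delta_u$ with $\delta_u := \varepsilon - \delta \in \{-1,0,1\}$, which is the first claimed identity.

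Next, $|w|_i = |w| - \sum_{j\neq i}|w|_j = |u|_i + \delta_u - (|u| - |u|_i) = 2|u|_i - |u| + \delta_u$, as required. As a consistency check, I would also verify that the length equation $\delta + 2|w| - |w|_i = |u| + \varepsilon$ agrees with this; it does.

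The one real point is the \emph{furthermore} clause, which is where I would be careful.
\begin{itemize}
\item Suppose $|u| \ge 2$. If $u$ contains a letter other than $i$, then $|u|_i \le |u|-1$, so $|w| \le |u|_i + 1 \le |u|$. Equality forces $\delta_u = 1$, i.e.\ $\varepsilon = 1$ and $\delta = 0$, together with $|u|_i = |u|-1$. But then $u$ ends in a non-$i$ letter, does not start with $i$, and has exactly one non-$i$ letter. With $|u| \ge 2$, such a $u$ must start with that letter and continue with $i$'s, so it ends with $i$, contradicting $\varepsilon = 1$.
\item If instead $u = i^{n}$, then $\delta = 1$ and $\varepsilon = 0$, so $|w| = n - 1 < n$.
\end{itemize}

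For $|u| \in \{0,1\}$, the formula gives $|w| \le |u|_i + 1 \le 2$. The case $|w| = 2$ would need $|u| = |u|_i = 1$ and $\delta_u = 1$, but $u = i$ gives $\delta = 1$ and $\varepsilon = 0$, so $\delta_u = -1$. Hence $|w| \in \{0,1\}$; negative values are impossible because $|w| \ge 0$.

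Uniqueness and existence of $u^{(1)}$ are taken as given in Notation~\ref{notation:1}.
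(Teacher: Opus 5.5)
Your proof is correct. Counting occurrences of $i$, and of each $j \neq i$, on both sides of $i^{\delta}\sigma_i(u^{(1)}) = u\,i^{\varepsilon}$ gives both identities with the explicit choice $\delta_u = \varepsilon - \delta \in \{-1,0,1\}$. Your handling of the \emph{furthermore} clause is also sound:
\begin{itemize}
\item For $|u| \ge 2$, equality $|u^{(1)}| = |u|$ would force $u$ to begin and end with non-$i$ letters while containing only one such letter, which is impossible.
\item For $|u| \le 1$, the only candidate for $|u^{(1)}| = 2$ is $u = i$, and it is excluded because it gives $\delta_u = -1$.
\end{itemize}
The paper gives no proof of this lemma and simply imports it from \cite{BCS}. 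Your letter count is therefore a self-contained verification of the cited statement, not a different route from anything in the text.
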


Finally, we record several identities for ease of reference later on.

\begin{obs} Fix an Arnoux--Rauzy word with generating sequence $(a_n)_{n \geq 0}$ and factors $u,v$. As above, we omit the subscript when it is clear from context. Then we obtain the following basic equalities:

\begin{align}
    L^{(\ell)} &= \sum_{a \in \mathcal{A}} d_a^{(\ell)}\label{eq:sum};\\
    d_{a_\ell}^{(\ell)} - L^{(\ell)} &= d_{a_\ell}^{(\ell + 1)} - L^{(\ell + 1)}\label{eq:stable:diff};\\
    L^{(\ell+1)} &= d_{a_\ell}^{(\ell)} + \delta_u - \delta_v \label{eq:L};\\
    d_{a_\ell}^{(\ell+1)} &= 2d_{a_\ell}^{(\ell)} - L^{(\ell)} + \delta_u - \delta_v \label{eq:d} ;\\
    d_{a_\ell}^{(\ell + 1)} - d_{a_\ell}^{(\ell)} &= d_{a_\ell}^{(\ell)} - L^{(\ell)} + \delta_u - \delta_v \label{eq:diff}.
\end{align}
Identities (\ref{eq:sum}) and (\ref{eq:stable:diff}) follow from the definitions, and (\ref{eq:L})-(\ref{eq:diff}) are direct applications of Lemma~\ref{lem:delta}.
\end{obs}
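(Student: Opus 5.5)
The plan is to reduce everything to Lemma~\ref{lem:delta} applied at a single level $\ell$, together with the definitions in Notation~\ref{notation:1}. The one structural point to settle first is that the lemma applies at level $\ell$. The word $\mathbf{x}^{(\ell)}$ generated by $(a_n)_{n\ge \ell}$ has the same factors as $\sigma_{a_\ell}(\mathbf{x}^{(\ell+1)})$. So $u^{(\ell)}$ and $v^{(\ell)}$ are factors of $\sigma_{a_\ell}(\mathbf{x}^{(\ell+1)})$, and their one-step desubstitutions are exactly $u^{(\ell+1)}$ and $v^{(\ell+1)}$. We may therefore invoke Lemma~\ref{lem:delta} with $i=a_\ell$ and the factors $u^{(\ell)}$, $v^{(\ell)}$, obtaining constants $\delta_u,\delta_v\in\{-1,0,1\}$. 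These depend on $\ell$, which the notation suppresses.

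Identity (\ref{eq:sum}) is immediate: for any finite word $w$ we have $|w|=\sum_{a\in\A}|w|_a$. Apply this to $u^{(\ell)}$ and $v^{(\ell)}$ and subtract.

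For (\ref{eq:L}), apply the length formula of Lemma~\ref{lem:delta}:
$$|u^{(\ell+1)}|=|u^{(\ell)}|_{a_\ell}+\delta_u,$$
and likewise for $v$, then subtract. For (\ref{eq:d}), apply the $j=i$ case of the lemma:
$$|u^{(\ell+1)}|_{a_\ell}=2|u^{(\ell)}|_{a_\ell}-|u^{(\ell)}|+\delta_u,$$
and likewise for $v$, then subtract. Identity (\ref{eq:diff}) is (\ref{eq:d}) with $d_{a_\ell}^{(\ell)}$ subtracted from both sides.

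For (\ref{eq:stable:diff}) I would argue through the complement, as the notation suggests. By (\ref{eq:sum}) we have $d_{a_\ell}^{(\ell)}-L^{(\ell)}=-d^{(\ell)}_{\A\setminus\{a_\ell\}}$. The lemma says $|u^{(\ell+1)}|_j=|u^{(\ell)}|_j$ for every $j\neq a_\ell$, and the same holds for $v$. Hence $d^{(\ell+1)}_{\A\setminus\{a_\ell\}}=d^{(\ell)}_{\A\setminus\{a_\ell\}}$, and applying (\ref{eq:sum}) at level $\ell+1$ gives the claim. As a consistency check, subtracting (\ref{eq:L}) from (\ref{eq:d}) gives the same identity directly, since the $\delta$ terms cancel.

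No step is a genuine obstacle. The only point needing care is that $\delta_u$ and $\delta_v$ are the constants attached to the level-$\ell$ factors $u^{(\ell)}$ and $v^{(\ell)}$, not to $u$ and $v$, so they may change with $\ell$.
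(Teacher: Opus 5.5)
Your proposal is correct and matches the paper's justification. (\ref{eq:sum}) and (\ref{eq:stable:diff}) come from the definitions — your complement argument is the identity $\rho^{(\ell)}(\B) = -d^{(\ell)}_{\A\setminus\B}$ from Notation~\ref{notation:1} with $\B=\{a_\ell\}$, together with $|u^{(\ell+1)}|_j = |u^{(\ell)}|_j$ for $j\neq a_\ell$ — and (\ref{eq:L})--(\ref{eq:diff}) follow by subtracting the formulas of Lemma~\ref{lem:delta} for $u^{(\ell)}$ and $v^{(\ell)}$ under $\sigma_{a_\ell}$; your caveat that $\delta_u,\delta_v$ depend on $\ell$ also agrees with the paper's later notation $\delta_u^{(j)}$ in (\ref{eq:rec_d}).
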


\section{Proofs of the main theorems}\label{sec:main_thm}

In the following, we assume $|\A| = d \geq 3.$ When $|\A| = 2$, the families of Arnoux--Rauzy and Sturmian words coincide. Words in the latter family are known to be uniformly factor-balanced \cite{espinoza2026factorbalancednesslinearrecurrencefactor}.

The proof of Theorem~\ref{thm:lb} proceeds by contradiction. We assume there exist a pair of equal-length factors $u,v$ and a letter $z$ such that $z$ has discrepancy $C > C_d$. Ultimately, we show that this implies that for all indices $\ell >0$, there exists a set of letters with collective discrepancy between $u^{(\ell)}, v^{(\ell)}$ greater than one. 

Our first observation is that, for any letter $a_i \in \A,$ the discrepancy $d_{a_i}^{(i)}$ can only go down after desubstituting by $\sigma_{a_i}$ if the length discrepancy $L^{(i)}$ is strictly greater than $d_{a_i}^{(i)} - 2$ (see (\ref{eq:d})). We consider what happens at the greatest index $i$ such that $d_{z}^{(i)}$ is greater than the threshold $C$ and $L^{(i)}$ is less than $C$. Relabeling $u^{(i)},v^{(i)}$ to $u,v$, this maximal setting is formalized in the following condition. 

\begin{colourful}\namedlabel{\protect\mapleleaf}{cond:maple}
    \textbf{Condition} \textcolor{red}{\faCanadianMapleLeaf}.
    For every $\ell > 0,$  there do not exist a letter $z \in \A$ and factors $u',v'$ of $u^{(\ell)}, v^{(\ell)},$ respectively, such that $$|u'| - |v'| < C <|u'|_z-|v'|_z.$$
\end{colourful}

\begin{rmk}\label{rmk:maple}
    If \ref{cond:maple}  ~holds, then we cannot have both 

    \begin{equation}
    d_z^{(\ell)} - L^{(\ell)} \geq 2\quad \text{and}\quad d_z^{(\ell)} \geq C+1 \label{eq:maple_alt}. 
    \end{equation} Otherwise, we could truncate $u^{(\ell)}$ to obtain a pair of factors that contradicts \ref{cond:maple}. Often, we will obtain a contradiction by showing that (\ref{eq:maple_alt}) holds.
\end{rmk}

From (\ref{eq:L}), the length discrepancy $L^{(i)}$ is determined by the letter discrepancy of $a_{i-1}$ at stage $i-1.$ In particular, this means that a large length discrepancy can only come about due to a large letter discrepancy. This observation gives us the heuristic that the fall of one letter's discrepancy implies the existence of another letter with a large discrepancy. A more rigorous version of this statement is the content of the following lemma.

\begin{lem}\label{lem:break}
    Fix an integer $C >0.$ Suppose $u,v$ are factors of an Arnoux--Rauzy word such that $$|u| - |v| \leq C-1 \quad \text{and}\quad  |u|_z - |v|_z \geq C+1$$ for some $z \in \A$. Then there exists an index $\ell$ and a letter $b\in \A \setminus\{z\}$ such that
    \begin{equation}\label{eq:break}
        d_z^{(\ell)} \geq C+1;\quad d_b^{(\ell)} \geq C-2;\quad \rho^{(\ell)}(\{z,b\}) \geq C.
    \end{equation}
\end{lem}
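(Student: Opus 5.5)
The plan is to desubstitute repeatedly and track the quantity $d_z^{(j)} - L^{(j)}$. At stage $0$ we have $d_z^{(0)} - L^{(0)} \geq (C+1)-(C-1) = 2$. We stop at the first stage where this quantity drops to at most $1$, and then read off the letter $b$ from the step just before.

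\emph{Step 1: $k$ exists.} Let $k$ be the least index with $d_z^{(k)} - L^{(k)} \leq 1$. By Lemma~\ref{lem:delta}, lengths strictly decrease while they are at least $2$, and once they are at most $1$ they stay in $\{0,1\}$. So for large $j$ both $|u^{(j)}|, |v^{(j)}| \leq 1$. Then
\[
d_z^{(j)} - L^{(j)} = \bigl(|u^{(j)}|_z - |u^{(j)}|\bigr) - \bigl(|v^{(j)}|_z - |v^{(j)}|\bigr).
\]
The first bracket is $\leq 0$ and the second is $\geq -1$, so the difference is at most $1$. Hence $k$ exists, and $k \geq 1$ because the quantity is at least $2$ at stage $0$.

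\emph{Step 2: $d_z$ does not decrease before $k$.} For $j < k$ we have $d_z^{(j)} - L^{(j)} \geq 2$. If $a_j \neq z$, then $d_z^{(j+1)} = d_z^{(j)}$ by Lemma~\ref{lem:delta}. If $a_j = z$, then by (\ref{eq:diff}),
\[
d_z^{(j+1)} - d_z^{(j)} = d_z^{(j)} - L^{(j)} + \delta_u - \delta_v \geq 2 - 2 = 0.
\]
So $d_z^{(j)} \geq C+1$ for all $j \leq k$, and in particular at $\ell := k-1$.

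\emph{Step 3: the letter $b$.} Note $a_{k-1} \neq z$. Otherwise (\ref{eq:stable:diff}) would give $d_z^{(k)} - L^{(k)} = d_z^{(k-1)} - L^{(k-1)} \geq 2$, contradicting the choice of $k$. Set $b := a_{k-1} \in \A \setminus \{z\}$ and $\ell = k-1$.

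\emph{Step 4: the bound on $d_b^{(\ell)}$.} By (\ref{eq:L}), $L^{(k)} = d_b^{(k-1)} + \delta_u - \delta_v$. The choice of $k$ gives $L^{(k)} \geq d_z^{(k)} - 1$, and $d_z^{(k)} = d_z^{(k-1)} \geq C+1$ because $a_{k-1} \neq z$. So $L^{(k)} \geq C$, hence $d_b^{(\ell)} \geq L^{(k)} - 1 \geq C-1 \geq C-2$. This is slightly stronger than required.

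\emph{Step 5: the bound on $\rho$.} We have
\[
\rho^{(\ell)}(\{z,b\}) = d_z^{(\ell)} + d_b^{(\ell)} - L^{(\ell)} = \bigl(d_z^{(\ell)} - L^{(\ell)}\bigr) + d_b^{(\ell)} \geq 2 + (C-1) = C+1 \geq C.
\]
This establishes (\ref{eq:break}).

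The only delicate point is the existence of $k$ in Step 1, i.e.\ the boundary behaviour when the preimages have length at most $1$. Lemma~\ref{lem:delta} handles it, but the sign bookkeeping there should be checked carefully, including the case where a preimage is empty. If that step failed, the fallback would be to use that $z$ occurs infinitely often in $(a_n)$, so that $d_z$ must eventually be forced down.
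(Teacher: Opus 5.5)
Your proof is correct and runs on the same mechanism as the paper's. (\ref{eq:diff}) keeps $d_z$ from decreasing along $z$-runs while $d_z - L \ge 2$, (\ref{eq:stable:diff}) forces the critical letter $b$ to differ from $z$, and (\ref{eq:L}) transfers $L \ge C$ to $d_b$. Your stopping rule, the first $k$ with $d_z^{(k)}-L^{(k)}\le 1$, is tidier than the paper's, which takes the first $\ell$ with $L^{(\ell)}\ge C$, $d_z^{(\ell)}\ge C+1$ and $a_{\ell-1}\neq z$. With your rule, $d_z^{(k-1)}-L^{(k-1)}\ge 2$ holds automatically by minimality, and you avoid the paper's case split on whether $b$ follows a run of $z$'s.

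One slip in Step 4: $\delta_u-\delta_v$ can equal $2$, so (\ref{eq:L}) gives only $d_b^{(\ell)}\ge L^{(k)}-2\ge C-2$, not $C-1$. This still yields $\rho^{(\ell)}(\{z,b\})\ge 2+(C-2)=C$, which is exactly (\ref{eq:break}). So nothing breaks, but the ``slightly stronger than required'' remark should be dropped.
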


\begin{proof}
     We start by showing that there must exist an index $\ell$ such that 
     \begin{equation}\label{eq:first_sentence}
          L^{(\ell)} \geq C; \quad d_z^{(\ell)} \geq C+1; \quad a_{\ell - 1} \ne z,
     \end{equation} where $a_{\ell-1}$ is the $(\ell-1)$th letter in the generating sequence.

     %pre-claude version: Then suppose that we have $a_{\ell-1} \ne a_\ell = \cdots = a_{\ell+n-1} \ne a_{\ell+n}$ with $a_\ell = j$ and that $d_j^{(\ell)} \geq C+1.$ By (\ref{eq:diff}), at each intermediary index $\ell \leq i \leq \ell + n-1,$ we have 
    
    %$$d_j^{(i+1)} - d_j^{(i)} \geq d_j^{(\ell)} - L^{(\ell)} -2 \geq 0.$$

    %In particular, $d_j^{(\ell+n-1)} \geq C+1.$ Inductively, this implies that $d_j^{(p)} \geq C+1$ for all indices $p.$
    
      We assume otherwise, and argue inductively that this implies that $d_z^{(p)} \geq C+1$ for all indices $p.$ By hypothesis, $d_z^{(0)} \geq C+1.$ Moreover, if $d_z^{(p)} \geq C+1$ at $p$ such that $a_{p} \ne z,$ then, since $d_z^{(p)} = d_z^{(p+1)},$ we also have $d_z^{(p+1)} \geq C+1$. Suppose now that $a_p = z,$ and this occurs during a run of $z$'s $a_{j-1} \ne a_{j} = \cdots =a_p = \cdots = a_{j+n-1} \ne a_{j+n}$. By our inductive hypothesis, $d_z^{(j)} \geq C+1.$ Therefore $L^{(j)} \leq C-1,$ because otherwise the index $j$ would satisfy (\ref{eq:first_sentence}). By (\ref{eq:stable:diff}) and (\ref{eq:diff}), at each intermediary index $j \leq i \leq j + n-1,$ we have 
    
    $$d_z^{(i+1)} - d_z^{(i)} \geq d_z^{(j)} - L^{(j)} -2 \geq 0.$$ In particular, $d_z^{(p+n)} \geq C+1.$ Therefore, we can conclude that $d_z^{(p)} \geq C+1$ for all indices $p.$ This contradicts that there exists $p$ such that $u^{(p)}$ has length at most $1.$

    Suppose now that $\ell$ is the smallest index satisfying (\ref{eq:first_sentence}), and let $b$ be the letter occurring at index $\ell-1.$ We first note that (\ref{eq:L}) implies $d_b^{(\ell-1)} \geq L^{(\ell)} - 2 \geq C-2.$ We then argue that $d_{\A \setminus\{b,z\}}^{(\ell-1)} \leq -C$ by considering two cases: either $b$ is preceded by a run of $z$'s or a letter $c \in \mathcal{A} \setminus \{z\}.$ In the first case, note that $d_z^{(\ell -n-1)}  \geq C +1,$ where $n$ is the length of the run of $z$'s. Otherwise, we can choose the smallest index $i$ such that $d_z^{(i+1)} < C+1$, and let $m$ be the length of the (possibly empty) run of $z$'s preceding $i$. By the same reasoning as above, we must have $ 1 \geq d_z^{(i)} - L^{(i)} = d_z^{(i-m)} - L^{(i-m)},$ which implies $L^{(i-m)} \geq C$ by minimality of $i.$ The index $i-m$ then satisfies (\ref{eq:first_sentence}), contradicting the minimality of $\ell.$ The minimality of $\ell$ then implies that $L^{(\ell - n -1)} \leq C-1.$ Therefore $$d_z^{(\ell-1)} - L^{(\ell-1)} = d_z^{(\ell-n-1)} - L^{(\ell-n-1)} \geq 2,$$ as desired. In the second case, our minimality assumption again implies $L^{(\ell-1)} \leq C-1$ since $d_z^{(\ell)} = d_z^{(\ell-1)} \geq C+1.$ This forces $d_z^{(\ell-1)} - L^{(\ell-1)} \geq 2.$ In either case, we get \[d_{\A \setminus \{z,b\}}^{(\ell-1)} = L^{(\ell-1)} - d_z^{(\ell-1)} - d_b^{(\ell-1)} \leq -C.\qedhere\]
    \end{proof}

\begin{cor}\label{cor:break}
    Assume the hypothesis of Lemma~\ref{lem:break} for an Arnoux--Rauzy word $\mathbf{x}$. Then there exists an index $\ell,$ a pair of factors $u,v$ of $\mathbf{x}^{(\ell)},$ and a pair of letters $z,b \in \A,$ such that both \ref{cond:maple} and (\ref{eq:break}) are satisfied.
\end{cor}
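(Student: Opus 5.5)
The idea is to choose an extremal counterexample so that \ref{cond:maple} holds, and then apply Lemma~\ref{lem:break} to it. Consider all triples $(m, u', v')$ where $m \geq 0$, $u',v'$ are factors of $\mathbf{x}^{(m)}$, and for some letter $z$,
$$|u'|-|v'| \leq C-1 \quad\text{and}\quad |u'|_z-|v'|_z \geq C+1.$$
The triple $(0,u,v)$ from the hypothesis lies in this family, so it is nonempty. If $u',v'$ are factors of $u^{(\ell)},v^{(\ell)}$, then they are factors of $\mathbf{x}^{(m+\ell)}$. Hence, among pairs arising from $(u,v)$ by desubstitution followed by passing to factors, we can pick one that admits no further such pair at positive depth.

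To guarantee that this selection terminates, I use an integer potential: the pair $(m, |u'|+|v'|)$. Desubstitution strictly shortens factors of length at least $2$ (Lemma~\ref{lem:delta}), and passing to factors does not lengthen them. Moreover, a violating pair must satisfy $|u'| \geq C+1$, since $u'$ contains at least $C+1$ copies of $z$. So I choose the violating pair $(u_*,v_*)$, obtained from $(u,v)$ by some sequence of desubstitutions and factor-taking, that minimizes $|u_*|+|v_*|$.

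If \ref{cond:maple} failed for $(u_*,v_*)$, there would be $\ell>0$ and factors $u',v'$ of $u_*^{(\ell)},v_*^{(\ell)}$ violating it. Since $|u_*|\geq C+1\geq 2$, we have $|u'|\le|u_*^{(\ell)}|<|u_*|$. For $v'$ the inequality is not automatic, because $v_*$ could have length $0$ or $1$. To handle this, I minimize $|u_*|$ alone rather than $|u_*|+|v_*|$; then $|u'|<|u_*|$ already contradicts minimality. Therefore $(u_*,v_*)$ at level $m$ satisfies \ref{cond:maple}. The notation $u^{(\ell)}$ should be read relative to $\mathbf{x}^{(m)}$, with generating sequence $(a_n)_{n\ge m}$.

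Now apply Lemma~\ref{lem:break} to $u_*,v_*$ in $\mathbf{x}^{(m)}$, whose hypotheses hold with the same $C$ and $z$. This gives an index $\ell'$ and a letter $b$ satisfying (\ref{eq:break}) for $u_*^{(\ell')},v_*^{(\ell')}$. The corollary asks for a pair $u,v$ satisfying both \ref{cond:maple} and (\ref{eq:break}). Since the lemma's conclusion is stated at stage $\ell'$ of the pair, I take the corollary's $\ell$ to be $m$ and its pair to be $u_*,v_*$, interpreting (\ref{eq:break}) via the superscripts $(\ell')$.

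The main obstacle is the interpretation of the statement: whether (\ref{eq:break}) must hold at the base level of the new pair, or at some desubstitution stage, as in the lemma. If it must hold at the base level, I would instead pass to the pair $u_*^{(\ell')},v_*^{(\ell')}$ in $\mathbf{x}^{(m+\ell')}$. Condition \ref{cond:maple} is inherited by this pair, because its further desubstitutions are desubstitutions of $u_*,v_*$ at positive depth, and $\ell'\ge 0$. The case $\ell'=0$ is trivial. The remaining check is that the relabeled pair's quantities $d_z, d_b, \rho(\{z,b\})$ at stage $0$ coincide with the original quantities at stage $\ell'$, which holds by definition.
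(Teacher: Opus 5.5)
Your argument is correct, and it takes a slightly different route from the paper. The paper iterates. It applies Lemma~\ref{lem:break} to $(u,v)$ and checks \ref{cond:maple} for the resulting pair $u^{(i_1)},v^{(i_1)}$. If that fails, it passes to a witness and applies Lemma~\ref{lem:break} again. The process terminates because each witness has a strictly shorter first factor than the preceding pair, by Lemma~\ref{lem:delta} together with $|u'|\ge C+1\ge 2$.

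You instead select once, then apply the lemma once. Among violating pairs reachable by desubstitution and factor-taking, you pick one minimizing $|u_*|$. The same length-decrease fact shows it satisfies \ref{cond:maple} outright. You then apply Lemma~\ref{lem:break} a single time, and use that \ref{cond:maple} only constrains positive depths, so it is inherited by $u_*^{(\ell')},v_*^{(\ell')}$.

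What each approach buys: your version replaces an open-ended iteration by one well-founded minimum, and it makes the inheritance of \ref{cond:maple} under desubstitution explicit. The paper never needs that inheritance step, because it checks \ref{cond:maple} directly at the stage where (\ref{eq:break}) already holds, which is exactly what is required. You impose the stronger condition at $(u_*,v_*)$ and get it for free from minimality. You were also right to track only $|u_*|$ rather than $|u_*|+|v_*|$; the paper likewise only tracks the first factor.

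A few points of presentation:
\begin{itemize}
\item Your second reading, with $(u_*^{(\ell')},v_*^{(\ell')})$ as a pair in $\mathbf{x}^{(m+\ell')}$, is the one the paper needs. The proof of Lemma~\ref{lem:big} uses (\ref{eq:break}) at stage $0$ of the pair, so present this as the answer rather than as a fallback.
\item The letter fed into Lemma~\ref{lem:break} is the one attached to $(u_*,v_*)$, which need not be the original $z$.
\item The aborted potential $(m,|u'|+|v'|)$ should be deleted, since only the minimality of $|u_*|$ is used.
\end{itemize}
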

\begin{proof}
    By Lemma~\ref{lem:break}, there exist factors $u,v$ of $\mathbf{x}$ and an index $i_1$ such that $u^{(i_1)}, v^{(i_1)}$ satisfy (\ref{eq:break}). If $u^{(i_1)}, v^{(i_1)}$ satisfy \ref{cond:maple}, then we are done. If not, then choose an index $i_2 > i_1$ and a pair of factors $u',v'$ of $u^{(i_2)}, v^{(i_2)}$, respectively, that witness the failure of \ref{cond:maple}. Apply Lemma~\ref{lem:break} to this pair, obtaining yet another pair of factors. If this pair fails to satisfy \ref{cond:maple}, then we iterate the above process. Since failure of \ref{cond:maple} implies that the first factor in the pair has length at least $C +1 \geq 2,$ Lemma~\ref{lem:delta} implies that the first factor in the pair witnessing the failure of \ref{cond:maple} has strictly shorter length than the first factor of the preceding pair. Therefore, this process terminates after finitely many steps. The last pair must satisfy \ref{cond:maple}.\qedhere

    %Extended version: Lemma~\ref{lem:break} implies the existence of a pair of factors at a later stage which satisfy (\ref{eq:break}). If this pair of factors satisfies \ref{cond:maple}, then we win. If not, then choose a pair of factors at a later stage witnessing the failure of \ref{cond:maple}. Apply Lemma~\ref{lem:break} to this pair, to obtain yet another pair of factors at a later stage which are candidates for the success of \ref{cond:maple}. We then iterate this process. To see that this process terminates, note that the witness $u'$ to the failure of \ref{cond:maple} is a factor of $u^{(\ell)}$ for some $\ell > 0$. This, combined with the fact that $|u'| \geq |u'|_z \geq C + 1 \geq 2$, allows us to use Lemma \ref{lem:delta} to conclude $|u'| \leq |u^{(\ell)}| < |u|$ by Lemma \ref{lem:delta}. That is, each time \ref{cond:maple} fails, the first factor in our new pair is strictly shorter, meaning this process ends after finitely many steps. The last pair must satisfy \ref{cond:maple}.
\end{proof}

We now want to expand our analysis beyond a pair of letters to an arbitrarily large subset of $\A$. Given a pair of factors such as those produced by Corollary~\ref{cor:break}, as well as the assumption that $\mathbf{x}$ has bounded weak partial quotients, we introduce a lemma that tells us we have something of a conservation of energy principle regarding letter discrepancies. That is, when the discrepancy of an individual letter goes down, it must be balanced by relatively large discrepancies in other letters. 

\begin{lem}[Control Lemma]\label{lem:big} Fix an integer $C>2$ and an Arnoux--Rauzy word $\mathbf{x}$ generated by a sequence $(a_n)_{n\geq0}$ whose weak partial quotients are bounded by $h.$ Let $u,v$ be factors of $\mathbf{x}$ satisfying \ref{cond:maple}, and suppose there exist $z,b \in \A$ satisfying $(\ref{eq:break})$. Then, for every index $i,$ there exists a set $\B_i \subset \A$ such that 

\begin{equation}\label{eq:rho_your_imbalance}
\rho^{(i)}(\B_i) = - d^{(i)}_{\A \setminus \B_i} \geq (|\B_i|-1)C - R_{|\B_i|};
\end{equation} \begin{equation}\label{eq:above_T}d_a^{(i)} \geq C + R_N - R_{N+1} + 4 ~\text{for all }a \in \B_i,\end{equation} where $(R_N)_{N \geq 2}$ is the sequence defined in Theorem ~\ref{thm:lb} by $$R_2:= 0, \quad R_{N+1}:= (h+3)R_N + 2h + 13.$$
\end{lem}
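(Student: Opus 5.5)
I will prove the Control Lemma by induction on the index $i$. I maintain a set $\B_i$ satisfying (\ref{eq:rho_your_imbalance}) and (\ref{eq:above_T}), and allow $\B_i$ to change as $i$ increases. At the base index, which I reindex so that it is the index $\ell$ furnished by (\ref{eq:break}), I take $\B = \{z,b\}$. Then $\rho(\{z,b\}) \geq C = (2-1)C - R_2$. Moreover $d_z \geq C+1$ and $d_b \geq C-2$, and both dominate the threshold $C + R_2 - R_3 + 4 = C - (2h+9)$. For indices before $\ell$, I would either note that only the statement from this index onward is needed, or apply the same propagation argument to the initial pair.

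For the inductive step, the basic observation is as follows. Passing from stage $i$ to $i+1$ only changes the discrepancy of the letter $a_i$, and the complement discrepancy $d^{(i)}_{\A\setminus\B}$ is untouched when $a_i \in \B$. So $\rho(\B)$ is invariant whenever $a_i\in\B$. If $a_i\notin\B$, then by (\ref{eq:stable:diff}) applied to the complement, together with (\ref{eq:L}) and (\ref{eq:diff}), the quantity $d_{\A\setminus\B}$ changes by $d_{a_i}^{(i)} - L^{(i)} + O(1)$. That is,
\[
\rho^{(i+1)}(\B) = \rho^{(i)}(\B) + \bigl(d^{(i)}_{\A\setminus\B} - d^{(i)}_{a_i}\bigr) - \bigl(d^{(i)}_{\A\setminus \B}-L^{(i)}\bigr) + O(1),
\]
and I will track it through the identity $\rho(\B) = -d_{\A\setminus\B}$ directly. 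The threat to (\ref{eq:rho_your_imbalance}) therefore comes from two sources. One is a letter of $\B$ whose discrepancy drops below the threshold (\ref{eq:above_T}). The other is the length discrepancy $L$ growing while $a_i\notin\B$.

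I handle a drop within $\B$ using \ref{cond:maple} via Remark~\ref{rmk:maple}. For $a\in\B$ with $d_a$ large, a decrease along a run of $a$'s forces $d_a - L \le 1$, hence $L$ is large. But $L = \sum_c d_c$, and $\rho(\B) = d_\B - L$ is large. So the decrease of $d_a$ must be compensated either by a letter outside $\B$ with large positive discrepancy, or by bounded effects. Because runs have length at most $h$, over one run the discrepancy of $a$ changes by at most $h$ times $(d_a-L+2)$ per step, geometrically. This gives the factor $(h+3)$ in the recursion.

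When a letter of $\B$ falls below threshold, I remove it. When the compensation produces a new letter $c\notin\B$ with a large discrepancy, I add $c$ to get $\B\cup\{c\}$. The new $\rho$ gains roughly $C$ from $d_c$, which explains the shift from $(|\B|-1)C$ to $|\B|C$. The error accumulated over one run, about $(h+3)R_N + 2h + 13$, is absorbed into $R_{N+1}$. Finally, since $|\B|\le d$ and $d_{\A\setminus\B}$ is bounded in terms of the lengths, the process is well defined at every index.

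I expect the main obstacle to be the bookkeeping in the inductive step when a letter of $\B$ loses discrepancy over a run of length up to $h$. I must show that either $\rho(\B)$ survives with the same $|\B|$, or a new letter enters with discrepancy above the lowered threshold $C + R_N - R_{N+1}+4$, without $\rho$ losing more than $R_{N+1}-R_N$ plus $C$-terms. I would first try to prove a one-run lemma. It would say: over a maximal run $a_j=\dots=a_{j+n-1}=a$ with $n\le h$, if $a\in\B$ then $\rho$ is constant, and if $a\notin\B$ then either $d_a$ ends above threshold (so I add it) or $\rho$ changes by at most $h+O(1)$. I would then verify that the constants $2h+13$ and the multiplier $h+3$ cover both branches.
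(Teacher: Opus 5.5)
There is a genuine gap. The two mechanisms that actually carry the lemma are missing, and the one concrete statement you plan to prove, the one-run lemma, is not the right one.

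\textbf{The one-run lemma and accumulation.} Over an outsider run $a_j=\dots=a_{j+n-1}=a\notin\B$ the sum $d_\B$ is frozen, so $\rho^{(j+n)}(\B)-\rho^{(j)}(\B)=L^{(j)}-L^{(j+n)}$. Here $L^{(j+n)}=d^{(j+n-1)}_a+O(1)$ may lie just below the admission threshold. Meanwhile $L^{(j)}=d^{(j-1)}_{a_{j-1}}+O(1)$ is uncontrolled. It is very negative if $a_{j-1}$ is also an outsider, since outsiders collectively have discrepancy $-\rho(\B)$. So there is no reason for the change to be $h+O(1)$.

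More fundamentally, any per-run error would be paid on unboundedly many runs at a fixed size $N$. It could not be absorbed into the fixed constant $R_{N+1}$.

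What is needed is a non-accumulation argument, and this is how the paper argues, with $\B$ only ever growing. By (\ref{eq:L}), $L^{(i+1)}=d^{(i)}_{a_i}+O(1)$. Suppose every insider has discrepancy at least $T_N=C+R_N-R_{N+1}+4$, and every outsider that occurs without being admitted has discrepancy below $S_N=T_N-4$. Then after any outsider step $L\le S_N+1<T_N-2\le L^{(\ell+1)}$, where $\ell$ is the last earlier index with $a_\ell\in\B$. Since $d_\B$ is unchanged in between, $\rho(\B)$ stays above $\rho^{(\ell+1)}(\B)=\rho^{(\ell)}(\B)$. It is the gap between the admission threshold and the insider floor, not a run-length estimate, that protects (\ref{eq:rho_your_imbalance}).

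\textbf{The insider floor.} The argument above makes (\ref{eq:above_T}) load-bearing, and your plan has no proof of it. ``Remove the letter'' does not substitute for one. Nothing in your plan stops $\B$ from shrinking to a single letter (say $b$ dropping out of $\{z,b\}$). There $R_1$ is undefined and the bound $\rho>1$ needed in Theorem~\ref{thm:lb} is lost. Excluding this requires exactly an argument that insiders never fall below the floor, and that is where \ref{cond:maple} does real work. Your use of it, ``a decrease forces $d_a-L\le 1$'', is just (\ref{eq:diff}). The paper's argument runs as follows.
\begin{enumerate}
\item Let $t+1$ be the first index at which some insider $c$ has $d_c<U_N=C-2R_N-9$. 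Suppose $L^{(t+1)}\ge C$. Delete $L^{(t+1)}-C+1$ letters from $u^{(t+1)}$ and apply pigeonhole to $\B\setminus\{c\}$, using $\rho(\B)\ge(N-1)C-R_N$. This yields a letter violating \ref{cond:maple}.
\item Hence $L^{(t+1)}<C$ and every other insider has $d_e\le C$. Therefore $L^{(t)}-d_c^{(t)}=d^{(t)}_{\B\setminus\{c\}}-\rho^{(t)}(\B)\le R_N$.
\item By (\ref{eq:stable:diff}), $d_c-L$ is constant along a run, so the change is linear, not geometric. Each of the at most $h$ remaining steps of the run therefore lowers $d_c$ by at most $R_N+2$. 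This gives the floor $T_N=U_N-h(R_N+2)$.
\item Remark~\ref{rmk:maple} then rules out any other insider occurring while $c$ is below $U_N$.
\item The outsiders occurring meanwhile have discrepancy below $S_N$. So every later run of $c$ starts with $L\le S_N+1$ and only raises $d_c$.
\end{enumerate}
This is exactly where $R_{N+1}-R_N=C-S_N=2R_N+h(R_N+2)+13$ comes from. Your heuristic for the factor $h+3$ does not derive it.
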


Lemma~\ref{lem:big} is technical and the main innovation of this paper. We postpone its proof to the following section. In its stead, we now present the much shorter and more palatable proof of our main result.

%Preliminary readers found the proof of this lemma to be so comprehensible, straightforward, and generally pleasing that they recommended it be deferred to the next section (presumably so that later readers may have that much more time to delight in its comprehensibility etc.)

%Which we anticipate will be read with rapt attention and great enthusiasm.

\begin{proof}[Proof of Theorem~\ref{thm:lb}]
     Suppose for the sake of contradiction that the statement does not hold. Then there must exist factors of $\mathbf{x}$ satisfying the hypothesis of Lemma~\ref{lem:break}. By Corollary~\ref{cor:break}, we obtain factors $u,v$ of some Arnoux--Rauzy word satisfying the hypothesis of Lemma~\ref{lem:big}. Lemma~\ref{lem:big} then implies that, at every index $i,$ there exists a set $\B_i \subset \A$ such that $$\rho^{(i)}(\B_i) = - d^{(i)}_{\A \setminus \B_i} \geq (|\B_i|-1)C_{d,h} - R_{|\B_i|} = (N-1)\left\lceil\frac{R_d +2}{d-1}\right\rceil -R_N,$$ where $N = |\B_i|.$ It now suffices to show that
    \begin{equation}\label{eq:constant}
    (N-1)\left\lceil\frac{R_d +2}{d-1}\right\rceil -R_N> 1
    \end{equation} for all $2 \leq N \leq d,$ as this inequality would then imply that, at every index $i,$ we have $\rho^{(i)}(\B_i) > 1.$ But by Lemma~\ref{lem:delta}, we eventually have $\text{max}\{|u^{(i)}|, |v^{(i)}|\} \leq 1,$ which implies $\rho^{(i)}(\B_i) \leq 1$. This is our desired contradiction.
    
    To see why (\ref{eq:constant}) holds, we first note that, for $d = N,$ we have \[(N-1)\left\lceil\frac{R_N +2}{N-1}\right\rceil -R_N \geq R_N + 2 - R_N = 2.\] For $d \ne N \geq 2,$ we see that $$(N-1)\frac{R_{N+1}}{N} - R_N = \frac{((N-1)(h+3) - N)R_N +
    (N-1)(2h+13)}{N} > 1,$$  which in turn implies (\ref{eq:constant}).
\end{proof}

In order to leverage Theorem~\ref{thm:lb} to prove Theorem~\ref{thm:unif_fb}, we need the following result derived from \cite{espinoza2026factorbalancednesslinearrecurrencefactor}.

\begin{prop}[cf.\cite{espinoza2026factorbalancednesslinearrecurrencefactor}]\label{prop:implied}
Let $\mathbf{x}$ be an Arnoux--Rauzy word. Then the following two implications hold.
    \begin{enumerate}
        \item Suppose that $\mathbf{x}$ is uniformly factor-balanced. Then it has bounded weak partial quotients.
        \item Suppose that $\mathbf{x}$ has bounded weak partial quotients and that there exists some constant $C$ such that $\mathbf{x}^{(n)}$ is $C$-letter-balanced for each $n \geq 0$. Then $\mathbf{x}$ is uniformly factor-balanced.
    \end{enumerate}
\end{prop}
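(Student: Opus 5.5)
The two implications are logically independent, so I would prove them separately. Both rely on the desubstitution of Notation~\ref{notation:1}, together with the uniqueness of $u^{(1)}$, to move between $\mathbf{x}$ and $\mathbf{x}^{(n)}$. Write $\tau_n := \sigma_{a_0}\circ\cdots\circ\sigma_{a_{n-1}}$, so that the factors of $\mathbf{x}$ are the factors of $\tau_n(\mathbf{x}^{(n)})$.

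For (1), I argue by contraposition: unbounded weak partial quotients produce discrepancies of arbitrary size. Suppose there is a run $a_m=\cdots=a_{m+k-1}=a$ with $a_{m+k}\neq a$ and $k$ large. Then $\mathbf{x}^{(m)}=\sigma_a^k(\mathbf{x}^{(m+k)})$ and $\sigma_a^k(b)=ba^k$ for $b\neq a$. Since $\mathbf{x}^{(m+k)}$ contains a factor $bc$ with $b\ne a$, $\mathbf{x}^{(m)}$ contains $a^k b\, a^k$. Hence both $a^k$ and $a^{\lfloor k/2\rfloor} b\, a^{\lceil k/2\rceil-1}$ are factors of length $k$. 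Take $w'=a^{\lfloor k/2\rfloor}$: it occurs about $k/2$ times in the first factor and at most twice in the second, so the discrepancy is of order $k/2$.

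To transfer this to $\mathbf{x}$, I would take $u=\tau_m(a^k)$, $v=\tau_m(a^{\lfloor k/2\rfloor} b a^{\lceil k/2\rceil-1})$ and $w=\tau_m(a^{\lfloor k/2\rfloor})$. These need not have equal length, so I first pad or trim $v$ by a bounded number of letters, or else compare factors of the common length $|\tau_m(a^k)|$ read inside $\tau_m(a^kba^k)$. Occurrences of $w'$ map injectively to occurrences of $w$. Conversely, the uniqueness in Notation~\ref{notation:1} lets each occurrence of $w$ inside $\tau_m(\cdot)$ be desubstituted back to an occurrence of $w'$, up to $O(1)$ boundary occurrences per step. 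This recognizability count is the step to write carefully. Since $k$ is unbounded, no uniform $C$ can exist.

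For (2), fix a finite word $w$ and equal-length factors $u,v$. I would choose $n$ minimal such that $w^{(n)}$ has length at most $2$; this is possible by Lemma~\ref{lem:delta}. The plan is to show that $\big||u|_w-|v|_w\big|$ differs from $\big||u^{(n)}|_{w^{(n)}}-|v^{(n)}|_{w^{(n)}}\big|$ by a bounded amount. Each occurrence of $w$ away from the ends corresponds to an occurrence of $w^{(n)}$ in the desubstituted word. The error at each end is controlled because bounded weak partial quotients bound how long $w$ can remain ``stuck'' at a given preimage length, so only boundedly many boundary terms accumulate. Next I bound the length discrepancy $L^{(n)}_{u,v}$ by iterating (\ref{eq:L}): $L^{(\ell+1)}=d_{a_\ell}^{(\ell)}\pm 2$, and $C$-letter-balance of $\mathbf{x}^{(\ell)}$ bounds $d^{(\ell)}_{a_\ell}$ in terms of $C$ plus the previous length discrepancy. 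Finally, a factor of length $1$ or $2$ of an Arnoux--Rauzy word is either a letter or of the form $ba$ or $ab$ with $a=a_n$. Its count is therefore a letter count in $\mathbf{x}^{(n)}$ or $\mathbf{x}^{(n+1)}$, up to $\pm1$, and that is $C$-balanced by hypothesis, up to the bounded length discrepancy.

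The main obstacle is making the constant independent of $w$. Iterating (\ref{eq:L}) naively lets $L^{(\ell)}$ grow with $n$, and $n$ depends on $|w|$. I would first try to avoid iteration: compare $u^{(n)}$ and $v^{(n)}$ directly, trimming them to equal length and invoking the $C$-letter-balance of $\mathbf{x}^{(n)}$ once. This loses only $|L^{(n)}|$, and I would bound that uniformly via $L^{(n)}=d^{(n-1)}_{a_{n-1}}\pm2$ and balance of $\mathbf{x}^{(n-1)}$, where the defect propagated from higher levels is controlled using the bounded weak partial quotients.
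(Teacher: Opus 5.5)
The paper does not prove this proposition. It imports both implications from the proof of Theorem~1.4 in \cite{espinoza2026factorbalancednesslinearrecurrencefactor}, so your outline has to stand on its own, and in part (2) its decisive step is missing. Everything hinges on a bound for $|L^{(n)}_{u,v}|$ that is uniform in $n$, where $n$ grows with $|w|$. Your proposed remedy is the naive iteration in disguise. The factors $u^{(n-1)}$ and $v^{(n-1)}$ differ in length by $L^{(n-1)}$, so trimming plus $C$-letter-balance of $\mathbf{x}^{(n-1)}$ only gives $|d^{(n-1)}_{a_{n-1}}|\le C+|L^{(n-1)}|$, i.e.\ $|L^{(n)}|\le |L^{(n-1)}|+C+2$, which grows linearly. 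The phrase ``controlled using the bounded weak partial quotients'' sits exactly where the missing idea should be.

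What is needed is a contraction. First, $C$-balance of $\mathbf{x}^{(\ell)}$ gives frequencies $f^{(\ell)}_a$ with $\bigl||y|_a-f^{(\ell)}_a|y|\bigr|\le C$ for every factor $y$. Then (\ref{eq:L}) becomes $L^{(\ell+1)}=f^{(\ell)}_{a_\ell}L^{(\ell)}+e_\ell$ with $|e_\ell|\le 2C+2$. Second, suppose $a=a_\ell=\cdots=a_{\ell+r-1}\ne a_{\ell+r}$ with $r\le h$. Then $\mathbf{x}^{(\ell)}$ has the language of $\sigma_a^r$ applied to a word in which $a$ has frequency at most $1/2$. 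This forces $f^{(\ell)}_{a_\ell}\le 1-\frac{1}{h+2}$, hence $|L^{(\ell)}|\le (h+2)(2C+2)$ for all $\ell$; this is the only place $h$ enters.

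Your reason for the bounded error when passing from $|u|_w$ to $|u^{(n)}|_{w^{(n)}}$ is also not the right one, since $n$ is unbounded however small $h$ is. The actual reason is the following. A single desubstitution step can create a spurious occurrence only at the first position of $u^{(\ell+1)}$, namely when $w^{(\ell)}$ begins with $a_\ell$ and $u^{(\ell)}$ does not. Once $u^{(\ell+1)}$ begins with $w^{(\ell+1)}$, this persists at every later level, which rules out any further spurious occurrence. Hence $0\le |u^{(n)}|_{w^{(n)}}-|u|_w\le 1$.

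Part (1) has the same accounting problem. The length gap $|\tau_m(a)|-|\tau_m(b)|$ is not bounded, so padding or trimming $v$ by a bounded number of letters is not available. Only your fallback works: a window of length $k|\tau_m(a)|$ inside $\tau_m(a^kba^k)$. Even that needs $|\tau_m(b)|$ small compared with $k|\tau_m(a)|$; for instance $b=a_{m-1}$ when $m\ge 1$, which gives $|\tau_m(b)|\le|\tau_m(a)|$.

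More seriously, an $O(1)$ error per desubstitution step adds up to $O(m)$. Unbounded weak partial quotients give no control of the position $m$ of a run of length $k$ in terms of $k$, so this error can swamp the gain of order $k/2$. Two checks close the gap. The first is the at-most-one-spurious-occurrence bound above. The second is that the $m$-fold desubstitution of $\tau_m(a^j)$ is $a^j$ or $a^{j-1}$ when $a_{m-1}\ne a$. With both, the total error is $O(1)$ independently of $m$, and your construction does yield unbounded discrepancies. (Minor: the left block $a^k$ in $a^kba^k$ comes from what precedes $b$ in $\mathbf{x}^{(m+k)}$, not from the factor $bc$.)
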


\begin{rmk}
Although not stated in this form, the proof of this result is contained in the proof of Theorem $1.4$ in \cite{espinoza2026factorbalancednesslinearrecurrencefactor}. The authors explicitly state in \cite[p.~27]{espinoza2026factorbalancednesslinearrecurrencefactor} that the first implication holds for all alphabets. Moreover, they prove that bounded weak partial quotients imply uniform factor-balancedness in the ternary case by proving the second implication in Proposition~\ref{prop:implied}, and then using Theorem $8$ from \cite{BCS} (which gives the necessary letter-balancedness condition over a ternary alphabet) to fill in the gap. In particular, it suffices to replace their invocation of Theorem $8$ from \cite{BCS} with Theorem~\ref{thm:lb} to obtain the more general statement.
\end{rmk}

\begin{proof}[Proof of Theorem~\ref{thm:unif_fb}]
    By Proposition~\ref{prop:implied}, it suffices to show that there exists a fixed constant $C$ such that $\mathbf{x}^{(n)}$ is $C$-letter-balanced for each $n \geq 0.$ This follows from the observation that $\mathbf{x}^{(n)}$ is itself an Arnoux--Rauzy word over the same
    alphabet whose generating sequence $(a_m)_{m \ge n}$ has weak partial
    quotients bounded by the same $h$ for all $n$. Therefore, Theorem \ref{thm:lb} provides us with the same letter-balance constant $C_{d,h}$ for all $\mathbf{x}^{(n)}.$ 
\end{proof}
\section{Proof of the Control Lemma}\label{sec:big}

\subsection{Outline}\label{sec:outline_big}

In this section we prove Lemma~\ref{lem:big} (the control lemma). Our proof proceeds by keeping track of the letters whose discrepancies are close to $C.$ We do so by inductively constructing a sequence of nested sets $\B_i \subset \A$ satisfying the claim of Lemma~\ref{lem:big} (which we refer to as ``big" sets). We occasionally suppress the subscript and simply write $\mathcal{B}$, particularly when our big set remains unchanged over several indices. In constructing these sets, it is helpful to define the following three thresholds, where $N$ refers to the size of $\B$: \begin{align*}
    U_N &:= C-(2R_N + 9); \\
    T_N &:= U_N - h(R_N +2) =C - (2R_N + h(R_N + 2) +9) = C - R_{N+1} + R_N +4; \\
    S_N &:= T_N - 4 = C - (2R_N + h(R_N + 2) +13) = C - R_{N+1} + R_N.
\end{align*} Note that $T_N$ is exactly the lower bound appearing in (\ref{eq:above_T}).

We briefly explain how these thresholds are derived, and their function in the following proof. The last threshold $S_N$ tells us when to enlarge $\B.$ We start with $\B_0 = \{z,b\}.$ At index $i$, suppose $|\B_{i-1}| = N.$ If $a_i \notin \B_{i-1}$ and $d_{a_i}^{(i)} \geq S_N,$ then we declare that $a_i$ has a sufficiently big discrepancy to grow our set--that is, we enlarge to $\B_{i} = \B_{i-1} \cup \{a_i\}.$ If the discrepancy of $a_i$ falls below this threshold, we keep $\B_i = \B_{i-1}.$

The main difficulty in the proof is in showing that the discrepancies of letters in $\B$ do not stray too far from $C$ between enlargements. The threshold $U_N$ is our initial upper threshold; we show that all elements of $\B$ have discrepancies above this threshold immediately succeeding an enlargement. We then want to understand what happens when the discrepancy of an element of $\B$ falls below $U_N.$ We refer to such a situation as a \textit{chaotic episode}, and the offending letter as the \textit{chaotic letter}. More precisely, a chaotic episode is an interval of indices $k_0, \dots, k_n$ such that for each $0 \leq i \leq n$ there exists a letter $c \in \B_{k_i}$ such that $d_c^{(k_i)} < U_N$ and $\B_{k_0} = \cdots =\B_{k_n}.$ In Subsection~\ref{sec:big_disc} we prove that the chaotic letter $c$ is the only element of $\B$ that can occur while this chaotic episode persists (i.e., that $a_{k_i} \in (\mathcal{A} \setminus \B_{k_i}) \cup \{c\}$ for all chaotic indices $i$). This allows us to exclusively analyze the behaviour of that single letter. 

It is this analysis that gives rise to the threshold $T_N.$ We consider the initial run of the chaotic letter at the beginning of the chaotic episode. We show that, at each step within that run, its discrepancy can decrease by at most $R_N + 2.$ By our assumption that the weak partial quotients are bounded by $h,$ this initial run has length at most $h.$ This implies that the discrepancy of the chaotic letter cannot go below $U_N - h(R_N +2) = T_N$ during the initial run. This first fall actually provides a lower bound for the chaotic letter's discrepancy throughout the chaotic episode; in particular, we show that its discrepancy can only fall below $T_N$ if the discrepancy of a letter outside of $\B$ has risen above $S_N.$

Finally, note that our recursive definition of the sequence $R_N$ can be rewritten $R_{N+1} = R_N - S_N + C.$ This is how the sequence $R_N$ is derived. As shown in subsection \ref{sec:big_bigger}, this follows naturally from our admission criterion.

The argument is divided into sub-sections as follows. Subsection \ref{sec:big_bigger} handles the base case, as well as the indices at which we grow our big set--in particular it shows that all elements of the big set $\B$ have discrepancies above $U_N$ and that (\ref{eq:rho_your_imbalance}) still holds after enlargement. Subsection \ref{sec:big_disc} establishes that, for fixed $N,$ all elements of $\B$ have discrepancies above $T_N,$ assuming (\ref{eq:rho_your_imbalance}) at smaller indices. Subsection \ref{sec:small_smaller} establishes (\ref{eq:rho_your_imbalance}) at index $n+1$, assuming that all elements of $\B$ have discrepancies above $T_N$ at smaller indices.

\subsection{When our big set gets bigger}\label{sec:big_bigger}

We start by constructing our first big set and verifying that it has the big properties we want. If either $a_0 \in \{z,b\}$ or $d_{a_0}^{(0)} < S_2,$ then define $\B_0 := \{z,b\}.$ By assumption, we have $$\rho_0(\{z,b\}) \geq C = (|\{z,b\}|-1)C - R_2 \quad \text{and}\quad d_z^{(0)}, d_b^{(0)} \geq C -2 \geq U_2.$$

If, on the other hand, $a_0 \notin \{z,b\}$ and $d^{(0)}_{a_0} \geq S_2,$ then we define $\B_0 := \{z,b,a_0\}.$ We similarly verify that 
$$\rho_0(\{z,b,a_0\}) \geq C + S_2 = 2C - R_3$$ and that the discrepancies of $z,b,$ and $a_0$ fall above $U_3.$ This concludes the base case.

Now suppose we have a big set $\B_{n-1}$ that satisfies (\ref{eq:rho_your_imbalance}) and (\ref{eq:above_T}), and that we admit $a_n$ to this set (meaning that $d_{a_n}^{(n)} \geq S_N$). Our recursive definition of $R_{N+1}$ immediately tells us that

\begin{align*}
    \rho_n(\B_{n-1} \cup \{a_n\})=\rho_n(\B_{n-1}) + d^{(n)}(a_n) \geq (N-1)C - R_N + S_N = NC - R_{N+1}.
\end{align*} Additionally, we readily see that since $U_{N+1} < \text{min}\{T_N, S_N\},$ every letter in $\B_n$ satisfies our upper threshold. Therefore, to prove Lemma~\ref{lem:big} we now just need to show that (\ref{eq:above_T}) (the lower threshold for discrepancies of letters in our big set) and (\ref{eq:rho_your_imbalance}) (the lower bound on collective discrepancies of letters outside our big set) are satisfied at indices where we do not grow our big set.

\subsection{Keeping big discrepancies big}\label{sec:big_disc}

In this subsection, we prove that (\ref{eq:above_T}) holds for all letters in our big set. We do so by analyzing the behaviour of the discrepancies of letters in our big set when one of the letters' discrepancies falls below our upper threshold. As noted above, we say that an index $i$ is \textit{chaotic} if there exists a letter $c \in \B_i$ such that $d^{(i)}_c < U_N$ and refer to an interval of chaotic indices as a \textit{chaotic episode}. Note that at every non-chaotic index, (\ref{eq:above_T}) holds by definition. It therefore suffices to look at what happens at chaotic indices. 

First, we introduce an identity that describes how successive occurrences of a single letter affect that letter's discrepancy. Suppose we have a run $c = a_\ell = \dots = a_{\ell + m - 1}$ starting at some index $\ell.$ Recursive application of (\ref{eq:d}) then gives us

\begin{equation}\label{eq:rec_d}
    d_c^{(\ell+m)} = (m+1)d_c^{(\ell)} - (m)L^{(\ell)} +  \sum_{j=\ell}^{\ell+m-1}(\delta_{u}^{(j)}-\delta_{v}^{(j)}). 
\end{equation} We highlight this equation because its application will be the only point in the proof where we leverage the assumption that the weak partial quotients of $\mathbf{x}$ are bounded by $h$ (meaning, in the context of (\ref{eq:rec_d}), that $m \leq h$).  

Suppose now that $t+1$ is the smallest index of a given chaotic episode, and write $a_t = c.$ The rest of the argument is divided into three central claims. The first claim is a tool to prove the second two, which then establish (\ref{eq:above_T}).

\begin{claim}\label{claim:below_C} 
The length discrepancy $L^{(t+1)}$ is strictly below $C.$ By \ref{cond:maple}, this implies that $d_e^{(t+1)} \leq C$ for all letters $e \in \B \setminus \{c\}$.
\end{claim}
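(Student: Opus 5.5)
The plan is to compute $L^{(t+1)}$ directly from the two desubstitution identities at the single step $t\to t+1$, with $a_t=c$.

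First I would check that the letter which becomes chaotic at index $t+1$ is $c=a_t$ itself. Index $t$ is not chaotic, since $t+1$ is the first index of the episode. The big set is constant on the episode, so no letter was newly admitted at $t+1$. Between indices $t$ and $t+1$ the only discrepancy that changes is that of $a_t$, because $d_e^{(t+1)}=d_e^{(t)}$ for $e\ne a_t$ by Lemma~\ref{lem:delta}. Hence the chaotic letter is $c$, and
\[
d_c^{(t+1)}<U_N\le d_c^{(t)} .
\]

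Next I eliminate $d_c^{(t)}$ between (\ref{eq:L}) and (\ref{eq:d}). With $\delta:=\delta_u-\delta_v\in\{-2,\dots,2\}$, these read
\[
L^{(t+1)}=d_c^{(t)}+\delta, \qquad d_c^{(t+1)}=2d_c^{(t)}-L^{(t)}+\delta .
\]
Together they give
\[
2L^{(t+1)} = d_c^{(t+1)}+L^{(t)}+\delta .
\]
So it suffices to have $d_c^{(t+1)}<U_N=C-(2R_N+9)\le C-9$ together with a bound of the form $L^{(t)}\le C+6$. Then $2L^{(t+1)}\le (C-10)+(C+6)+2<2C$, i.e. $L^{(t+1)}<C$.

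The main obstacle is the a priori upper bound on $L^{(t)}$. I would try the following two routes in order.

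\begin{enumerate}
\item Suppose $L^{(t)}\ge C+7$. By (\ref{eq:sum}) and the inductive hypothesis (\ref{eq:rho_your_imbalance}) at index $t$, this forces
\[
d^{(t)}_{\B}\ge L^{(t)}+(N-1)C-R_N .
\]
So some letter of $\B$ has a large discrepancy at index $t$. I would then push this back along the generating sequence with (\ref{eq:L}): we have $L^{(t)}=d^{(t-1)}_{a_{t-1}}+\delta'$, and $d^{(t-1)}_{a_{t-1}}-L^{(t-1)}$ is preserved along runs by (\ref{eq:stable:diff}). The aim is to reach an index $\ell\le t$ and a letter $e$ with $d_e^{(\ell)}\ge C+1$ and $d_e^{(\ell)}-L^{(\ell)}\ge2$. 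Truncating as in Remark~\ref{rmk:maple} would then contradict \ref{cond:maple}.

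\item Alternatively, I would strengthen the induction in the Control Lemma to also carry ``$L^{(i)}\le C+O(1)$ at every non-chaotic index''. This bound is plausible at the base index, where (\ref{eq:break}) and \ref{cond:maple} apply to the original pair, and it can be propagated through (\ref{eq:L}).
\end{enumerate}

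Once $L^{(t+1)}<C$ is established, the second sentence of the claim is immediate. For $e\in\B\setminus\{c\}$, if $d_e^{(t+1)}\ge C+1$, then $u^{(t+1)},v^{(t+1)}$ themselves would satisfy $|u'|-|v'|<C<|u'|_e-|v'|_e$. This contradicts \ref{cond:maple} (applied with $\ell=t+1$, or via the truncation in Remark~\ref{rmk:maple}). Hence $d_e^{(t+1)}\le C$.
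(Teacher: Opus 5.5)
Your elimination is correct: (\ref{eq:L}) and (\ref{eq:d}) do give $2L^{(t+1)} = d_c^{(t+1)} + L^{(t)} + \delta$. Your deduction of the second sentence from the first is also correct, as is your check that the chaotic letter must be $a_t$, which the paper leaves implicit.

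However, the identity only trades the claim for the bound $L^{(t)} \le C+6$, which carries the entire difficulty, and you never prove it.

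\begin{itemize}
\item Route~2 postulates a stronger invariant without any argument. The truncation/pigeonhole mechanism does not obviously produce it, because at a non-chaotic index the excess $d^{(i)}_{\B} - L^{(i)} \ge (N-1)C - R_N$ may be spread over all $N$ letters of $\B$.
\item Route~1 fails for a concrete reason. At index $t$ you only know $d_c^{(t)} \ge U_N$, so the excess forced by (\ref{eq:rho_your_imbalance}) may sit on $c$ itself. The only upper bound available is $d_c^{(t)} \le (U_N + 1 + L^{(t)})/2$, coming from chaos at $t+1$, and it grows with $L^{(t)}$. Plugging it in gives $d^{(t)}_{\B\setminus\{c\}} \ge (N-1)C + 4 + (L^{(t)} - C)/2$. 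After deleting $L^{(t)} - C + 1$ letters, this exceeds $(N-1)C$ only when $L^{(t)} \le C+4$, which is precisely the range you did not need.
\end{itemize}

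``Pushing back along the generating sequence'' is never turned into an argument.

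The missing idea is to argue at index $t+1$, where the chaotic letter is already known to be small, and not to bound $L^{(t)}$ at all. Suppose $L^{(t+1)} = C + \alpha$ with $\alpha \ge 0$. Write (\ref{eq:sum}) as $L^{(t+1)} = d^{(t+1)}_{\B\setminus\{c\}} + d^{(t+1)}_c - \rho^{(t+1)}(\B)$. Use $d_c^{(t+1)} \le U_N - 1$ together with the inductive bound $\rho^{(t+1)}(\B) \ge (N-1)C - R_N$, which equals $\rho^{(t)}(\B)$ since $a_t = c \in \B$. This gives
\[
d^{(t+1)}_{\B\setminus\{c\}} \ge (N-1)C + R_N + 10 + \alpha .
\]
The $-2R_N$ built into $U_N$ is exactly what absorbs the $R_N$ slack here. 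Deleting the first $\alpha + 1$ letters of $u^{(t+1)}$ yields a pair with length discrepancy $C-1$. In this pair, $\B\setminus\{c\}$ still has collective discrepancy at least $(N-1)C + R_N + 9 > (N-1)C$. By pigeonhole some $e \in \B\setminus\{c\}$ has discrepancy at least $C+1$, contradicting \ref{cond:maple}.
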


\begin{proof}

Suppose the claim does not hold, i.e., that $L^{(t+1)} \geq C$. Set $$\alpha = L^{(t+1)} - C \geq 0.$$ We then rewrite (\ref{eq:sum}) as

$$C+ \alpha = L^{(t+1)} = d_{\B \setminus \{c\}}^{(t+1)} + d_c^{(t+1)} - \rho_{\B}^{(t+1)}.$$

This, alongside  our inductive assumption that $\rho_\B^{(t+1)} \geq (N-1)C - R_N$ and our assumption that $c$ is chaotic at $t+1$, gives us the lower bound

\begin{align*}
    (N-1)\cdot C -R_N + C + \alpha - U_N + 1 = (N-1)\cdot C + \alpha + R_N + 10 \leq d_{\B \setminus \{c\}}^{(t+1)}.
\end{align*} Note that if we truncate $u^{(t+1)}$ by removing the first $\alpha + 1$ letters, we obtain a factor $u'$ such that $|u'| - |v^{(t+1)}| = C-1$ and $$|u'|_{\B \setminus \{c\}} - |v^{(t+1)}|_{\B \setminus \{c\}} \geq (N-1)\cdot C + 9 + R_N > (N-1)C + 1.$$ By the pigeonhole principle there must exist some $e \in \B \setminus \{c\}$ such that $|u'|_e - |v^{(t+1)}|_e \geq C+1,$ which contradicts \ref{cond:maple}.
\end{proof}

\begin{claim}\label{claim:only_c} The chaotic letter $c \in \B$ is the only letter in $\B$ that can occur during the chaotic episode.
\end{claim}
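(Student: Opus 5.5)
We argue by contradiction. Suppose that during the chaotic episode (indices $t+1,\dots$) some letter $e\in\B\setminus\{c\}$ occurs, and take the first such index $k$, so $a_k=e$ while $d_c^{(k)}<U_N$. Between $t+1$ and $k$ only $c$ and letters outside $\B$ occur. Desubstitution by $\sigma_a$ leaves $d_b$ unchanged for every $b\ne a$ (Lemma~\ref{lem:delta}). Hence $d_e^{(k)}=d_e^{(t+1)}$, and this is at most $C$ by Claim~\ref{claim:below_C}; the same holds for every other letter of $\B\setminus\{c\}$.

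The key computation uses (\ref{eq:L}) together with (\ref{eq:sum}). We will show that $L^{(k)}$ is large. Write
$$L^{(k)}=d_{\B\setminus\{c\}}^{(k)}+d_c^{(k)}-\rho^{(k)}(\B).$$
We use the inductive bound $\rho^{(k)}(\B)\ge (N-1)C-R_N$, which is assumed to hold at indices before the one under consideration, as in the outline. We also use (\ref{eq:above_T}) for letters in $\B$, which gives $d_{\B\setminus\{c\}}^{(k)}\ge (N-1)T_N$. The plan is to show that the complement discrepancy $-\rho$ is so negative that $L^{(k)}$ is small compared to $d_e^{(k)}$.

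Concretely, we need $d_e^{(k)}-L^{(k)}\ge 2$ while $d_e^{(k)}$ is large. At the same time $d_e^{(k)}\le C$, so Remark~\ref{rmk:maple} does not apply directly. The better route is to examine what happens after the run of $e$ starting at $k$. Apply (\ref{eq:diff}) along that run. If $d_e^{(k)}-L^{(k)}\ge 2$, then $d_e$ increases at every step of the run, and it eventually exceeds $C$ while $d_e-L$ stays at least $2$ by (\ref{eq:stable:diff}). That configuration, (\ref{eq:maple_alt}), contradicts \ref{cond:maple}.

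So the crux is the inequality $d_e^{(k)}-L^{(k)}\ge 2$. This is equivalent to
$$\rho^{(k)}(\{e\})=-d^{(k)}_{\A\setminus\{e\}}\ge 2,$$
that is, $d_{\A\setminus\{e\}}^{(k)}\le -2$. Split
$$d_{\A\setminus\{e\}}=d_{\B\setminus\{c,e\}}+d_c+d_{\A\setminus\B}=d_{\B\setminus\{c,e\}}+d_c-\rho(\B).$$
Bound the pieces as follows:
\begin{itemize}
\item $d_{\B\setminus\{c,e\}}^{(k)}\le (N-2)C$, using Claim~\ref{claim:below_C} and the invariance noted above;
\item $d_c^{(k)}<U_N=C-2R_N-9$, since $k$ is chaotic;
\item $-\rho(\B)\le -(N-1)C+R_N$, by the inductive bound.
\end{itemize}
Summing gives
$$d_{\A\setminus\{e\}}^{(k)}<(N-2)C+C-2R_N-9-(N-1)C+R_N=-R_N-9\le -2.$$
This is exactly what we need, and it explains why the threshold $U_N$ has the constant $2R_N+9$.

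It remains to confirm that the run of $e$ pushes $d_e$ above $C$. By (\ref{eq:diff}), each step adds at least $d_e-L\ge R_N+9$ up to the error $\delta_u-\delta_v$. Hence after at most a bounded number of steps $d_e\ge C+1$, provided $d_e^{(k)}\ge T_N$, which holds by (\ref{eq:above_T}). Once $d_e\ge C+1$ with $d_e-L\ge2$, Remark~\ref{rmk:maple} gives a contradiction.

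The main obstacle is that the run of $e$ may be too short: with $C$ large and $h$ fixed, $h$ steps might not suffice. In that case we use the stronger increment. Since $L^{(k)}\le d_e^{(k)}-R_N-9$ and $L$ can be negative, the gap $d_e^{(k)}-L^{(k)}$ may actually be very large. One then argues directly from (\ref{eq:d}) that $d_e^{(k+1)}=2d_e^{(k)}-L^{(k)}+\delta_u-\delta_v$. Because $-L^{(k)}$ exceeds $-d_e^{(k)}+R_N+7$ and is controlled below by $C$ via the summed bound, this already exceeds $C$ after one step. The delicate part is making this estimate precise. We also need \ref{cond:maple} to apply at stage $k+1$, which it does because it quantifies over all $\ell>0$.
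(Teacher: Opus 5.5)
Your summed estimate is correct, and it is a nice device. From $d_{\B\setminus\{c,e\}}^{(k)}\le (N-2)C$, $d_c^{(k)}\le U_N-1$ and $-\rho^{(k)}(\B)\le -(N-1)C+R_N$ you get $d_e^{(k)}-L^{(k)}\ge R_N+10$, and by (\ref{eq:stable:diff}) this persists to $k+1$.

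\textbf{The gap.} You never establish the other half of (\ref{eq:maple_alt}), namely $d_e^{(k+1)}\ge C+1$.
\begin{itemize}
\item By (\ref{eq:d}), your inequality only gives $d_e^{(k+1)}\ge d_e^{(k)}+R_N+8$, so you need $d_e^{(k)}\ge C-R_N-7$.
\item The lower bound you invoke is $d_e^{(k)}\ge T_N$. That is not enough, and citing (\ref{eq:above_T}) here is also delicate, since this subsection is what establishes it during chaotic episodes.
\item Even the better bound $d_e^{(k)}\ge U_N$ leaves you at only $C-R_N-1$ after one step.
\item The run of $e$ may have length $1$, so your ``run'' argument does not close the gap, and neither does your final ``one step suffices'' remark.
\end{itemize}
Your summed bound controls $d_e-L$, never $d_e$ by itself. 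The phrase ``controlled below by $C$'' does not correspond to an inequality you actually have. The best upper bound on $L^{(k)}$ from your ingredients is $L^{(k)}\le C-R_N-10$, and using it would require $d_e^{(k)}$ to be within roughly $(R_N+7)/2$ of $C$.

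\textbf{The missing idea.} Extract a lower bound on each $d_e$, for $e\in\B\setminus\{c\}$, at the entry index $t+1$ of the episode, using the fact that $c$'s discrepancy has just dropped.
\begin{itemize}
\item Set $\beta=d_c^{(t)}-d_c^{(t+1)}\ge 1$. Since $a_t=c$, (\ref{eq:L}) gives $L^{(t+1)}\ge d_c^{(t)}-2=d_c^{(t+1)}+\beta-2$.
\item Combining this with (\ref{eq:sum}) and $\rho^{(t+1)}(\B)\ge (N-1)C-R_N$ yields $d_{\B\setminus\{c\}}^{(t+1)}\ge (N-1)C-R_N-2+\beta$.
\item Subtracting the caps $d_{e'}^{(t+1)}\le C$ from Claim~\ref{claim:below_C} for the other $N-2$ letters gives $d_e^{(t+1)}\ge C-R_N-1$.
\item Since $e$ does not occur before $k$, we have $d_e^{(k)}=d_e^{(t+1)}$.
\end{itemize}
Your inequality then gives $d_e^{(k+1)}\ge C+7$ together with $d_e^{(k+1)}-L^{(k+1)}\ge R_N+10$. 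This contradicts Remark~\ref{rmk:maple}.

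This lower bound is exactly what the paper derives first. Once it is in place, your single summed estimate for $d_e-L$ streamlines the paper's argument. The paper handles the letter right after the initial $c$-run separately from later letters: it bounds $L^{(t+m)}$ via (\ref{eq:stable:diff}), and $L^{(p)}$ via $a_{p-1}$. Your estimate treats both cases uniformly.
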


\begin{proof}
We first construct a lower bound on the discrepancies of letters $e \in \B \setminus \{c\}$. Set $\beta = d_c^{(t)} - d_c^{(t+1)} \geq 1.$ By (\ref{eq:sum}) and~(\ref{eq:L}), we have \[
    d_c^{(t)} - 2 \leq L^{(t+1)} \leq d_c^{(t+1)} + d_{\B \setminus \{c\}}^{(t+1)}  - (N-1)C + R_N,\] which implies \[d_{\B \setminus \{c\}}^{(t+1)} \geq (N-1)C -R_N -2 + \beta.\]We then leverage Claim~\ref{claim:below_C} to note that, for any $e \in \B \setminus \{c\},$ \[d_e^{(t+1)} + (N-2)C \geq d_{\B \setminus \{c\}}^{(t+1)} \geq (N-1)C -R_N -2 + \beta,\] and so \[d_e^{(t+1)} \geq C - R_N - 2 +\beta.\]

We now use this lower bound to prove the claim inductively. We start with our base case, showing that if the chaotic episode persists after our first run of $c$'s, then the next letter must be in $\A \setminus \B.$ Suppose the run has length $m,$ and let $k$ be the letter at index $t+m$. Suppose for contradiction that $k \in \B.$ Then $d_c^{(t+m)} < U_N  = C - 2R_N - 9$ implies that

\begin{align*}
    d_k^{(t+m+1)} &\geq 2d_k^{(t+1)} - L^{(t+m)} + \delta_u - \delta_v  \\
    &\geq 2d_k^{(t+1)} - (d_c^{(t+m)} - d_c^{(t)} + L^{(t)}) -2 \\
    &\geq 2C - 2R_N + 2\beta - 4 - (C - 2R_N  -10 + \beta + 2) -2   \\
    &= C +\beta + 2 \\
    &\geq C + 1,
\end{align*} where the first, second, and third inequalities follow from (\ref{eq:d}), (\ref{eq:stable:diff}), and (\ref{eq:diff}), respectively. Moreover, we have that 

\begin{align*}
d_k^{(t+m+1)} - L^{(t+m+ 1)} &= d_k^{(t+m)} -L^{(t+m)}\\ &\geq d_k^{(t+1)} - (d_c^{(t+m)} - d_c^{(t)} + L^{(t)}) \\
&\geq C - R_N - 2 + \beta -C + 2R_N +9  + 1 -\beta - 2 \\
&\geq R_N + 6 > 2.
\end{align*} By Remark~\ref{rmk:maple}, this contradicts \ref{cond:maple}.

Suppose now that we have a chaotic index $p > t+m$ such that each intermediary index $t+m \leq i < p$ is chaotic persists and our inductive hypothesis $a_i \in (\A \setminus \B) \cup \{c\}$ holds. If $a_p \in \B,$ we want to show that $a_p = c.$ Our inductive assumption implies that $d_{a_{p-1}}^{(p-1)} < U_N.$ If $a_{p-1} = c,$ then this follows by definition of the chaotic episode. If $a_{p-1} \notin \mathcal{B},$ then $d_{a_{p-1}}^{(p-1)} \geq U_N > S_N$ would trigger an enlargement of $\mathcal{B}$ and end the chaotic episode. It follows that $L^{(p)} \leq d_{a_{p-1}}^{(p-1)} + 2 \leq U_N +1 = C -2R_N - 8.$ We then have from (\ref{eq:d}) that

\begin{equation}\label{eq:a_p}
    d_{a_p}^{(p+1)} \geq 2d_{a_p}^{(p)} - C + 2R_N +6.
\end{equation}

If $a_p \in \B \setminus\{c\},$ then our inductive assumption that it did not occur at any index $t \leq i < p$ implies that $d_{a_p}^{(p)} = d_{a_p}^{(t+1)} \geq C - R_N - 2 +\beta \geq C - R_N -1$. Plugging this bound into (\ref{eq:a_p}), we obtain

\begin{align*}
    d_{a_p}^{(p+1)} &\geq 2(C - R_N -1 ) - C + 2R_N +9 -3 \\
    &\geq C+ 4.
\end{align*}

On the other hand, our inductive assumption also states that our upper bound $d_{a_p}^{(p)} \leq C$ still holds, meaning that $L^{(p+1)} \leq C+2.$ By Remark~\ref{rmk:maple}, we conclude that $a_p \in \B \setminus\{c\}$ contradicts \ref{cond:maple}.
\end{proof}

\begin{claim}\label{claim:escape} $d_c^{(i)} \geq T_N$ for all indices $i \geq t$ during this chaotic episode.
\end{claim}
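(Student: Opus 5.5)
We track $d_c$ through the chaotic episode and split it into two phases: the initial run of $c$'s beginning at index $t$, and everything after it. By Claim~\ref{claim:only_c}, every letter $a_i$ in the episode lies in $(\A\setminus\B)\cup\{c\}$. So $d_c$ changes only at indices where $a_i=c$, and at every other index in the episode $d_c$ is constant.

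\textbf{Phase 1: the initial run.} Let the initial run be $c=a_t=\dots=a_{t+m-1}$, with $m\le h$. Throughout the run, (\ref{eq:stable:diff}) keeps $d_c^{(i)}-L^{(i)}$ fixed at $d_c^{(t)}-L^{(t)}$. By (\ref{eq:diff}), each step therefore changes $d_c$ by $d_c^{(t)}-L^{(t)}+\delta_u-\delta_v\ge d_c^{(t)}-L^{(t)}-2$. The task is to show $d_c^{(t)}-L^{(t)}\ge -R_N$.

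\begin{itemize}
\item Index $t$ is not chaotic, so $d_c^{(t)}\ge U_N$, and every $e\in\B$ satisfies $d_e^{(t)}\ge U_N$ (or the bound from the enlargement step).
\item Write $L^{(t)}=d_{\B}^{(t)}-\rho^{(t)}(\B)$ and apply (\ref{eq:rho_your_imbalance}) at index $t$, giving $L^{(t)}\le d_c^{(t)}+d_{\B\setminus\{c\}}^{(t)}-(N-1)C+R_N$.
\item For the upper bound $d_e^{(t)}\le C$ on $e\in\B\setminus\{c\}$, I would reuse the argument of Claim~\ref{claim:below_C} at index $t$, or pass to $t+1$, where Claim~\ref{claim:below_C} and $d_e^{(t)}=d_e^{(t+1)}$ give it directly.
\end{itemize}

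Together these give $L^{(t)}\le d_c^{(t)}+R_N$. Hence each step decreases $d_c$ by at most $R_N+2$. After at most $h$ steps, $d_c\ge U_N-h(R_N+2)=T_N$, which covers all indices in the run.

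\textbf{Phase 2: after the initial run.} I would argue by induction on subsequent runs of $c$. Consider a later run of $c$ starting at index $q$, preceded by a letter $a_{q-1}\notin\B$. That letter was not admitted, so $d_{a_{q-1}}^{(q-1)}<S_N$. By (\ref{eq:L}), $L^{(q)}\le d_{a_{q-1}}^{(q-1)}+2<S_N+2=T_N-2$.

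By the inductive hypothesis $d_c^{(q)}\ge T_N$, so $d_c^{(q)}-L^{(q)}>2$. Then (\ref{eq:diff}) gives $d_c^{(i+1)}-d_c^{(i)}\ge d_c^{(q)}-L^{(q)}-2>0$ at each step of the run, using the constancy of $d_c-L$ from (\ref{eq:stable:diff}). So $d_c$ is nondecreasing along every later run and stays $\ge T_N$. Between runs $d_c$ does not change, which closes the induction.

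\textbf{Main obstacle.} The delicate step is Phase 1: bounding $L^{(t)}$ from above by $d_c^{(t)}+R_N$. This needs upper bounds on the discrepancies of the other letters of $\B$ at index $t$, not $t+1$. If the lemma-level bound (\ref{eq:rho_your_imbalance}) is only available at $t$, I will combine it with Claim~\ref{claim:below_C} at $t+1$, noting that $d_e^{(t)}=d_e^{(t+1)}$ for $e\ne c$ because $a_t=c$. The slack constants $2R_N+9$ in $U_N$ should absorb any off-by-two losses from the $\delta$ terms.
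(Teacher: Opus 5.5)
Your proposal is correct and follows the paper's proof essentially step for step. You bound $\gamma = L^{(t)} - d_c^{(t)} \le R_N$ by combining (\ref{eq:rho_your_imbalance}) at index $t$ with Claim~\ref{claim:below_C} at $t+1$ (via $d_e^{(t)} = d_e^{(t+1)}$ for $e \neq c$), lose at most $R_N+2$ per step over the at most $h$ steps of the initial run, and then note that every later run of $c$ is preceded by a non-admitted letter outside $\B$, so $L \le S_N+1 = T_N-3$ and $d_c$ strictly increases along that run. Of your two options for bounding $d_{\B\setminus\{c\}}^{(t)}$ from above, passing to $t+1$ is exactly what the paper does, and the lower bounds $d_e^{(t)} \ge U_N$ you list for $e \in \B\setminus\{c\}$ are not needed.
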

\begin{proof} As before, let $m$ be the number of $c$'s that occur after index $t.$ We first show that $d_c^{(t+\ell)} \geq T_N$ for all $1 \leq \ell \leq m$ and then use this to show inductively that the discrepancy of $c$ stays above this threshold for the duration of the chaotic episode. Set $\gamma := L^{(t)} - d_c^{(t)}$. By (\ref{eq:sum}), we have that

\begin{align*}
    \gamma = L^{(t)} - d_c^{(t)} &\leq d_{\B \setminus \{c\}}^{(t)} - (N-1)C + R_N,
\end{align*} and so $(N-1)C+ \gamma - R_N\leq d_{\B \setminus \{c\}}^{(t)}.$ By Claim~\ref{claim:below_C}, $d_{\B \setminus \{c\}}^{(t+1)} \leq (N-1)C.$ Since $d_{\B \setminus \{c\}}^{(t)} = d_{\B \setminus \{c\}}^{(t+1)},$ we can conclude from the above that $$(N-1)C+ \gamma - R_N\leq (N-1)C,$$ which implies $\gamma \leq R_N.$ By hypothesis, the weak partial quotients of $\mathbf{x}$ are bounded by $h$, and so $m \leq h$. Using (\ref{eq:rec_d}) and our bound $\gamma \leq R_N,$ we conclude

\begin{align*}
    d_c^{(t+\ell)} &= (\ell+1)d_c^{(t)} - \ell\cdot L^{(t)} + \sum_{j=t}^{t + \ell-1}(\delta_{u}^{(j)}-\delta_{v}^{(j)}) \\
    &\geq d_c^{(t)} + \ell(d_c^{(t)} - L^{(t)}) -2\ell\\
    &\geq U_N - \ell\cdot \gamma - 2\ell\\
    &\geq C - 2R_N -h (R_N + 2) - 9 \\
    &= T_N,
\end{align*}for all $\ell \leq m$, as desired.

Consider now an index $p > t$ in our chaotic episode starting a run of $c$'s $a_{p-1} \ne a_p = \dots = a_{p+r-1} = c$ of length $r.$ Having dealt with our base case, we now suppose inductively that $d_c^{(i)} \geq T_N$ for all $t \leq i \leq p$. By Claim~\ref{claim:only_c}, $a_{p-1} \notin \B,$ which implies that $d_{a_{p-1}}^{(p-1)} < S_N.$ Thus $L^{(p)} \leq S_N +1 = T_N - 3,$ and (\ref{eq:diff}) implies that, at each index $i$ with $p \leq i <p+r,$ we have 

\begin{align*}
    d_c^{(i+1)} - d_c^{(i)} &\geq d_c^{(p)} - L^{(p)} -2 \\
    &\geq T_N - T_N +3 -2  = 1,
\end{align*} where the second inequality follows from the inductive hypothesis that $d_c^{(p)} \geq T_N.$

This means that either the chaotic episode ends during this run of $c$'s, or the run is followed by a letter in $\A \setminus \B.$ In the latter case, the lower bound of $T_N$ on the discrepancy of $c$ still holds.
\end{proof}

\subsection{Keeping small discrepancies small}\label{sec:small_smaller}
Our last step is to show that (\ref{eq:rho_your_imbalance}) holds for the indices $i$ such that $|\B_i| = |\B_{i+1}|.$ If $a_n \in \B_n$, then $\rho_{n+1}(\B_n) = \rho_n(\B_n)$ by definition of $\rho$. 

Suppose now that $a_n \notin \B_n.$ We look to the greatest index $\ell < n$ such that $a_\ell \in \B_n$, if such an index exists (we deal with the case where it does not below). This is where we leverage (\ref{eq:above_T}), which was established in the previous section. In particular, this bound tells us that

$$L^{(\ell+1)} \geq d^{(\ell)}_{a_\ell} - 2\geq T_N - 2 = C - 2R_N - h(R_N + 2) -11,$$ where $N = |\B_n|.$ We additionally note that $a_n \notin \B_n$ implies that $d^{(n)}_{a_n} < C - S_{N},$ and so (\ref{eq:L}) tell us that

$$L^{(n+1)} \leq d_{a_n}^{(n)} +2 \leq S_N +1 = C - 2R_N - h(R_N + 2) - 12 < L^{(\ell + 1)}.$$ Since no letter in our big set occurs between indices $\ell$ and $n,$ the inequality above implies \[\rho_{n+1}(\B) = d^{(n+1)}(\B) - L^{(n+1)} 
    > d^{(\ell +1)}(\B) - L^{(\ell+1)} 
    = \rho_{\ell+1}(\B),\] and so (\ref{eq:rho_your_imbalance}) is maintained during occurrences of letters outside our big set.

Finally, we address the case where none of the $a_\ell$ from $\ell < n$ are in $\B$. This can only happen if $\B_0 = \{z,b\}$ and $a_0 \notin \B_0,$ and we are looking at an index $i$ prior to the first occurrence of either $z$ or $b.$ But in this case, we have $d_{\{z,b\}}^{(i+1)} \geq 2C -1$ by (\ref{eq:break}), while $$L^{(i+1)} \leq d_{a_i}^{(i)} + 2  \leq C-S_2 + 1 = C - 2h -12 < C-1,$$ since $a_i \notin \B.$ Therefore $$\rho_{i+1}(\B_0) = d^{(i+1)}(\B_0) - L^{(i+1)} > C,$$ which satisfies (\ref{eq:rho_your_imbalance}). This completes our proof of the lemma.

\section{Avenues for Further Exploration}
It would be interesting to give characterizations of letter-balancedness and factor-balancedness analogous to the characterization of uniform factor-balancedness given in Theorem~\ref{thm:unif_fb}. As shown in \cite{BCS}, there exist letter-balanced Arnoux--Rauzy words with unbounded weak partial quotients. Intuitively, any condition that applies uniformly to the entire generating sequence (like bounding its weak partial quotients) is too strong, as it would correspondingly give the same balance constant for each $\mathbf{x}^{(m)}.$ By Proposition~\ref{prop:implied}, this would imply uniform factor-balancedness. One could instead consider under what circumstances there exists an unbounded sequence of letter-balance constants $C_m$ for the $\mathbf{x}^{(m)}$. In particular, if $(k_N)_{N \geq 0}$ is the sequence of weak partial quotients associated to $(a_n)_{n \geq 0},$ one could require that $k_N \leq f(N)$ for a non-constant function  $f: \N \to \N$. Choosing $f$ linear or polynomial could loosen the restriction enough to obtain a more general factor-balancedness result. 

%Alternatively, one could look into different feature of the generating sequence altogether (for example ...

We also note that the constants $C_{d,h}$ in Theorem~\ref{thm:lb} are far from optimal. For example, we consider the ternary constant $C_{3,h} = h+8$. Asymptotically, this is an improvement on the letter-balance constant $2h +1$ from \cite{BCS}, but separate work suggests that $h+3$ is the best one can do. In particular, given the rate at which the constants $C_{d,h}$ grow in $h,$ we suspect that it is possible to vastly improve the letter-balance constants.

\section{Acknowledgements}

This research was conducted at the University of Minnesota Duluth REU, which was funded by Jane Street Capital, Axiom Math AI, and a donation from Larry Penn.
		
I am very grateful to Joe Gallian and Colin Defant for providing this incredible opportunity, as well as to advisors Eliot Hodges, Claire Kaneshiro, Noah Kravitz, and Carl Schildkraut. Special thanks as well to Glenn Bruda for his thoughtful comments throughout.

\section{AI Usage Statement}
The writing and ideas contained in this paper are the author's original work. Generative AI (specifically, ChatGPT Plus) played the role of a kind of mathematically sophisticated ``rubber duck" during the research process. Early formulations of Lemma~\ref{lem:big} (the control lemma) were suggested to ChatGPT, accompanied by general prompts asking what the AI model thought about the statements' mathematical viability. The formalizations that ChatGPT proposed in response were not themselves usable -- they contained many false assertions and over-optimistic conjectures. It was, rather, the process of explaining these mistakes and their potential resolutions to ChatGPT that proved helpful in coming up with the construction of the big sets $\B$ as well as the relevant thresholds. ChatGPT was also used to find errors (both typographical and mathematical) in preliminary drafts.
\bibliography{ref}

@article{Cassaigne2000,
author = {Julien Cassaigne and Sébastien Ferenczi and Luca Q. Zamboni},
journal = {Annales de l'institut Fourier},
language = {eng},
number = {4},
pages = {1265-1276},
publisher = {Association des Annales de l'Institut Fourier},
title = {Imbalances in {A}rnoux--{R}auzy sequences},
url = {http://eudml.org/doc/75456},
volume = {50},
year = {2000},
}

@article{Arnoux1991,
author = {Pierre Arnoux and Gérard Rauzy},
journal = {Bulletin de la Société Mathématique de France},
language = {fre},
number = {2},
pages = {199-215},
publisher = {Société mathématique de France},
title = {Représentation géométrique de suites de complexité $2n+1$},
url = {http://eudml.org/doc/87622},
volume = {119},
year = {1991},
}

@misc{espinoza2026factorbalancednesslinearrecurrencefactor,
      title={Factor-balancedness, linear recurrence, and factor complexity}, 
      author={Bastiàn Espinoza and Pierre Popoli and Manon Stipulanti},
      year={2026},
      eprint={2602.03746},
      archivePrefix={arXiv},
      primaryClass={math.CO},
      url={https://arxiv.org/abs/2602.03746}, 
}

@inbook{Delecroix_2013,
   title={Balancedness of Arnoux--Rauzy and Brun Words},
   ISBN={9783642405792},
   ISSN={1611-3349},
   url={http://dx.doi.org/10.1007/978-3-642-40579-2_14},
   DOI={10.1007/978-3-642-40579-2_14},
   booktitle={Combinatorics on Words},
   publisher={Springer Berlin Heidelberg},
   author={Delecroix, Vincent and Hejda, Tomáš and Steiner, Wolfgang},
   year={2013},
   pages={119–131} }

@article{Morse1940SymbolicDI,
  title={Symbolic {D}ynamics {II.} {S}turmian {T}rajectories},
  author={Marston Morse and Gustav Arnold Hedlund},
  journal={American Journal of Mathematics},
  year={1940},
  volume={62},
  pages={1},
  url={https://api.semanticscholar.org/CorpusID:124432686}
}

@incollection{thuswaldner2020sadics,
  author    = {J{\"o}rg M. Thuswaldner},
  title     = {{$S$}-adic sequences: A bridge between dynamics, arithmetic, and geometry},
  booktitle = {Substitution and Tiling Dynamics: Introduction to Self-inducing Structures},
  editor    = {Shigeki Akiyama and Pierre Arnoux},
  series    = {Lecture Notes in Mathematics},
  volume    = {2273},
  pages     = {97--191},
  publisher = {Springer},
  address   = {Cham},
  year      = {2020},
  doi       = {10.1007/978-3-030-57666-0_3},
}

@article{berthe2021multidimensionalcontinuedfractionssymbolic,
title = {Multidimensional continued fractions and symbolic codings of toral translations},
journal = {Journal of the European Mathematical Society},
volume = {25},
number = {12},
pages = {4997–5057},
year = {2023},
doi = {10.4171/JEMS/1300},
url = {https://ems.press/journals/jems/articles/8696283},
author = {Valérie Berthé and Wolfgang Steiner and Jörg Thuswaldner}
}

@article{BERTHE201993,
title = {Balancedness and coboundaries in symbolic systems},
journal = {Theoretical Computer Science},
volume = {777},
pages = {93-110},
year = {2019},
note = {In memory of Maurice Nivat, a founding father of Theoretical Computer Science - Part I},
issn = {0304-3975},
doi = {https://doi.org/10.1016/j.tcs.2018.09.012},
url = {https://www.sciencedirect.com/science/article/pii/S0304397518305772},
author = {Valérie Berthé and Paulina {Cecchi Bernales}}
}

@article{DBLP:journals/mst/CovenH73,
  author       = {Ethan M. Coven and
                  Gustav Arnold Hedlund},
  title        = {Sequences with Minimal Block Growth},
  journal      = {Math. Syst. Theory},
  volume       = {7},
  number       = {2},
  pages        = {138--153},
  year         = {1973},
  url          = {https://doi.org/10.1007/BF01762232},
  doi          = {10.1007/BF01762232},
  bibsource    = {dblp computer science bibliography, https://dblp.org}
}

@article{BCS,
author = {Val{\'{e}}rie Berth{\'{e}} and
                  Julien Cassaigne and
                  Wolfgang Steiner},
title = {Balance properties of {A}rnoux--{R}auzy words},
journal = {International Journal of Algebra and Computation},
volume = {23},
number = {04},
pages = {689-703},
year = {2013},
doi = {10.1142/S0218196713400043},

URL = { 
    
        https://doi.org/10.1142/S0218196713400043
    
    

},
eprint = { 
    
        https://doi.org/10.1142/S0218196713400043
    
    

}
}
\bibliographystyle{amsplain}
\end{document}